\newtheorem{theorem}{Theorem}
\newtheorem{lemma}{Lemma}
\titleclass{\subsubsubsection}{straight}[\subsection]
\newcounter{subsubsubsection}[subsubsection]
\renewcommand\thesubsubsubsection{\thesubsubsection.\arabic{subsubsubsection}}
\renewcommand\paragraph{\@startsection{paragraph}{5}{\z@}%
  {3.25ex \@plus1ex \@minus.2ex}%
  {-1em}%
  {\normalfont\normalsize\bfseries}}
\renewcommand\subparagraph{\@startsection{subparagraph}{6}{\parindent}%
  {3.25ex \@plus1ex \@minus .2ex}%
  {-1em}%
  {\normalfont\normalsize\bfseries}}
\def\toclevel@subsubsubsection{4}
\def\toclevel@paragraph{5}
\def\toclevel@paragraph{6}
\def\l@subsubsubsection{\@dottedtocline{4}{7em}{4em}}
\def\l@paragraph{\@dottedtocline{5}{10em}{5em}}
\def\l@subparagraph{\@dottedtocline{6}{14em}{6em}}
\def\b{\begin{eqnarray}}
\def\e{\end{eqnarray}}
\def\n{\noindent}
\newcommand*\bigdot{\mathpalette\bigdot@{.5}}
\newcommand*\bigdot@[2]{\mathbin{\vcenter{\hbox{\scalebox{#2}{$\m@th#1\bullet$}}}}}
\begin{document}

\begin{center}
{\huge \textbf{The Siebeck--Marden--Northshield \vskip.2cm Theorem and the Real Roots of the \vskip.5cm Symbolic Cubic Equation}}

\vspace{9mm}
\noindent
{\large \bf Emil M. Prodanov} \vskip.4cm
{\it School of Mathematical Sciences, Technological University Dublin,
\vskip.1cm
Park House, Grangegorman, 191 North Circular Road, Dublin
D07 EWV4, Ireland,}
\vskip.1cm
{\it e-mail: emil.prodanov@tudublin.ie} \\
\vskip.5cm
\end{center}

\vskip2cm

\begin{abstract}
\n
The isolation intervals of the real roots of the symbolic monic cubic polynomial $x^3 + a x^2 + b x + c$ are determined, in terms of the coefficients of the polynomial, by solving the Siebeck--Marden--Northshield triangle --- the equilateral triangle that projects onto the three real roots of the cubic polynomial and whose inscribed circle projects onto an interval with endpoints equal to stationary points of the polynomial.
\end{abstract}

\vskip2cm
\noindent
{\bf Mathematics Subject Classification Codes (2020)}: 26C10, 12D10, 11D25.
\vskip1cm
\noindent
{\bf Keywords}: Polynomials; Cubic equation; Siebeck--Marden--Northshield theorem; Roots; Isolation intervals; Root bounds.

\newpage

\section{Introduction}
\n
The elegant theorem of Siebeck and Marden (often referred to as Marden's theorem) \cite{m1}--\cite{bad} relates geometrically the complex non-collinear roots $z_1, \,\, z_2,$ and $z_3$ of a cubic polynomial with complex coefficients to a triangle in the complex plane whose vertices are $z_1, \,\, z_2,$ and $z_3$, on one hand, and, on the other, the critical points of the polynomial to the foci of the inellipse of this triangle. This ellipse is unique and is called Steiner inellipse \cite{st}. It is inscribed in the triangle in such way that it is tangent to the sides of the triangle at their midpoints. \\
The real version of the Siebeck--Marden Theorem, as given by Northshield \cite{north}, states that the three real roots (not all of which are equal)
of a cubic polynomial are projections of the vertices of some equilateral triangle in the plane. However, it is the inscribed circle of the equilateral triangle that projects onto an interval the endpoints of which are the stationary points of the polynomial. \\
The goal of this work is to consider a cubic equation with real coefficients and, using the Siebeck--Marden--Northshield theorem \cite{north}, solve the equilateral triangle and find the isolation intervals of the real roots of the symbolic monic cubic polynomial $x^3 + a x^2 + b x + c$.

\section{Analysis}
\n
{\bf Construction}: Any three real numbers, not all equal, are the projections of the vertices of some equilateral triangle in the plane. For the monic cubic polynomial $p(x) = x^3 + a x^2 + b x + c$ with three real roots $x_1, \,\, x_2,$ and $x_3$, not all equal, the vertices of the equilateral triangle --- points $P$, $Q$, and $R$ on Figure 1 --- with coordinates $(x_1, (x_2 - x_3)/\sqrt{3})$, $(x_2, (x_3 - x_1)/\sqrt{3})$, and $(x_3, (x_1 - x_2)/\sqrt{3})$, respectively, project on the roots \cite{north}.  This is the {\it Siebeck--Marden--Northshield triangle}. The inscribed circle of this triangle projects to an interval with endpoints equal to the critical points $\mu_{1,2} = -a/3 \pm (1/3) \sqrt{a^2 - 3b}$ of the cubic polynomial --- the roots of the derivative $p'(x) = 3x^2 + 2ax + b \,$ of $\, p(x)$ \cite{north}. The centroid of the triangle is at $\phi = -a/3$ on the abscissa --- the first coordinate projection of the inflection point of $p(x)$ --- the root of the second derivative $p''(x) = 6x + 2a$. Each side of the triangle is equal to $\alpha = (\sqrt{12}/3) \sqrt{a^2 - 3b}$. The radius of the inscribed circle is $r = (1/3) \sqrt{a^2 - 3b}$. The radius of the circumscribed circle is $2r = (2/3) \sqrt{a^2 - 3b}$.

\begin{figure}[!ht]
\centering
\includegraphics[height=8.8cm, width=0.58\textwidth]{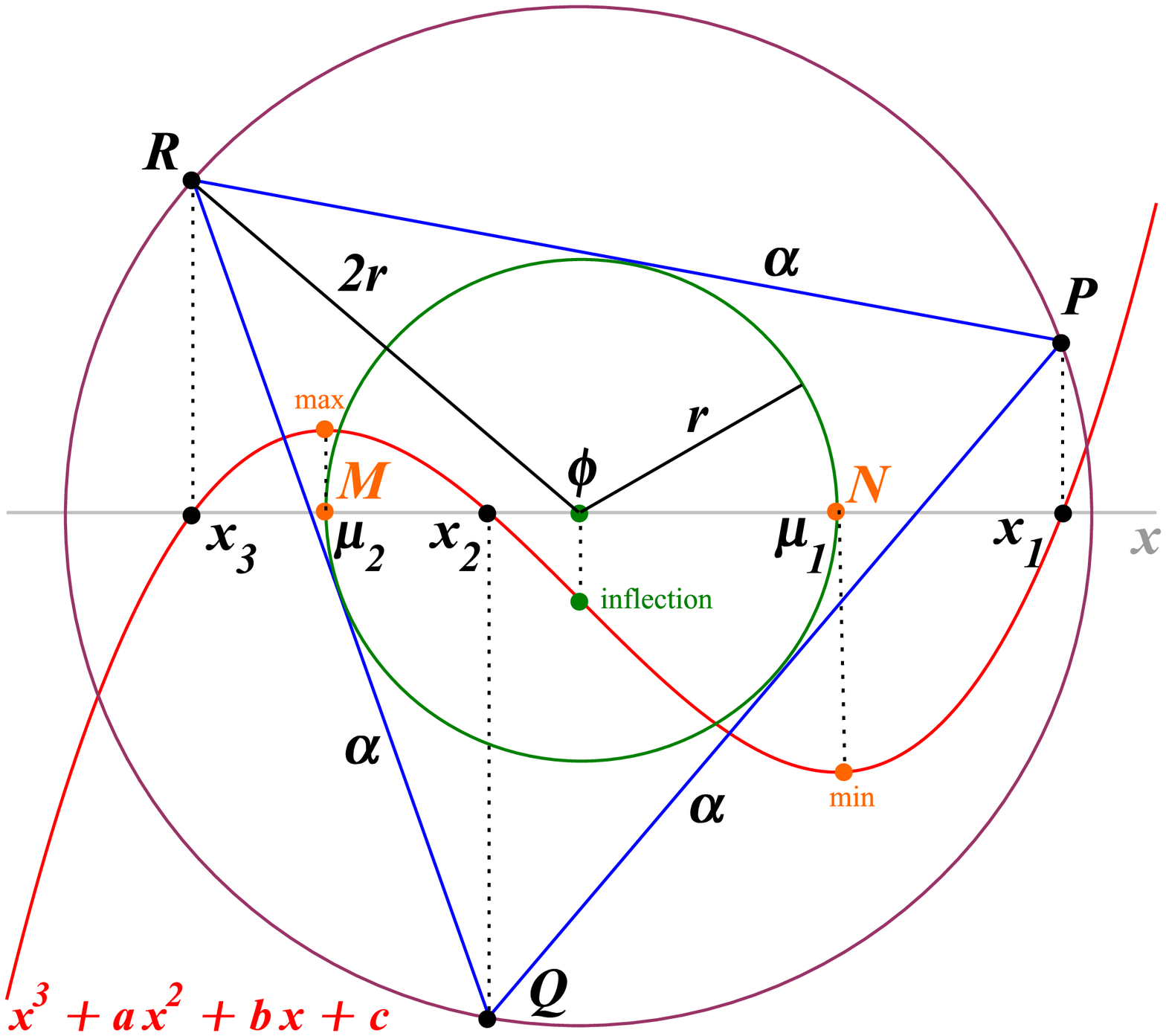}}
{\begin{minipage}{34.5em}
\scriptsize
\vskip.3cm
\begin{center}
{\bf Figure 1}
\end{center}
\vskip-.3cm
{\bf Siebeck--Marden--Northshield Triangle}: When the cubic polynomial $p(x) = x^3 + a x^2 + b x + c$ has three real roots $x_{1,2,3}$ which are not all equal, they can be obtained as projections of the vertices of an equilateral triangle ($PQR$) with coordinates $(x_1, (x_2 - x_3)/\sqrt{3})$, $(x_2, (x_3 - x_1)/\sqrt{3})$, and $(x_3, (x_1 - x_2)/\sqrt{3})$, respectively \cite{north}.
\end{minipage}
\end{figure}

\begin{lemma}
The monic cubic polynomial $p(x) = x^3 + a x^2 + b x + c$ with $b > a^2/3$ has only one real root.
\end{lemma}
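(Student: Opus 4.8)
The plan is to show that the hypothesis $b > a^2/3$ forces $p$ to be strictly monotonic, so that it can have at most one real root, and then to note that every odd-degree real polynomial has at least one. First I would rewrite the hypothesis as $a^2 - 3b < 0$, the very quantity that appears inside the square roots throughout the construction above (in the critical points $\mu_{1,2}$, the side length $\alpha$, and the radius $r$).

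Next I would turn to the derivative $p'(x) = 3x^2 + 2ax + b$, whose real zeros are exactly the critical points of $p$. Its discriminant equals $(2a)^2 - 4 \cdot 3 \cdot b = 4(a^2 - 3b)$, which is strictly negative under the hypothesis. Hence $p'$ has no real zeros, and since its leading coefficient $3$ is positive, $p'(x) > 0$ for every real $x$; equivalently, the critical points $\mu_{1,2}$ form a complex-conjugate pair. Therefore $p$ is strictly increasing on the whole real line.

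Finally, since $p$ is a cubic with positive leading coefficient, $p(x) \to -\infty$ as $x \to -\infty$ and $p(x) \to +\infty$ as $x \to +\infty$, so by the intermediate value theorem $p$ has at least one real root, while strict monotonicity forbids a second. Combining the two gives exactly one real root, as claimed. I do not expect a genuine obstacle here: the argument is a one-line calculus observation, and the only real content is translating the stated hypothesis into the sign of the discriminant of $p'$. It is worth remarking that this dovetails with the geometry, since for $a^2 - 3b < 0$ the inscribed-circle radius $r = (1/3)\sqrt{a^2 - 3b}$ becomes imaginary and the Siebeck--Marden--Northshield triangle degenerates --- precisely the case in which three real roots cannot occur.
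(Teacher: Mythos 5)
Your argument is correct, but it is not the route the paper's own proof takes. The paper proves Lemma 1 by computing the discriminant $\Delta_3 = -27c^2 + (18ab - 4a^3)c + a^2b^2 - 4b^3$ of the cubic, viewing it as a quadratic in $c$ with discriminant $\Delta_2 = 16(a^2-3b)^3$, observing that $b > a^2/3$ forces $\Delta_2 < 0$ and hence $\Delta_3 < 0$ for every $c$, and concluding from the sign of the cubic discriminant that there is exactly one real root. Your proof instead goes through the derivative: the discriminant of $p'(x) = 3x^2 + 2ax + b$ is $4(a^2 - 3b) < 0$, so $p$ is strictly monotonic and, being of odd degree, has exactly one real root. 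Amusingly, the paper itself concedes your point --- immediately after its proof it remarks that ``this can be seen in an easier way'' and sketches precisely your derivative argument. What the paper's heavier computation buys is continuity with the rest of the analysis: the same quadratic-in-$c$ discriminant $\Delta_3$ is reused in Lemma 2 to locate the interval $[c_2, c_1]$ of values of $c$ giving three real roots, so establishing Lemma 1 via $\Delta_3$ sets up that machinery. Your approach is more elementary and self-contained, and it connects more directly to the geometric picture (no critical points means no incircle and no Siebeck--Marden--Northshield triangle), but it does not by itself produce the quantities $c_{1,2}$ needed later.
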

\begin{proof}
The discriminant of the monic cubic polynomial $x^3 + a x^2 + b x + c$ is
\b
\Delta_3 = -27 c^2 + (18 a b - 4 a^3) c + a^2 b^2 - 4b^3.
\e
It is quadratic in $c$ and the discriminant of this quadratic is
\b
\Delta_2 = 16 (a^2 - 3b)^3
\e
As $b > a^2/3$, one has $\Delta_2 < 0$ for all $a$ and thus $\Delta_3 < 0$ for all $a$ and $c$. Hence, the cubic polynomial $p(x) = x^3 + a x^2 + b x + c$ with $b > a^2/3$ has only one real root (and a pair of complex conjugate roots).
\end{proof}

\noindent
This can be seen in an easier way: the discriminant of the derivative $p'(x) = 3x^2 + 2ax + b$ is $4(a^2 - 3b)$, hence no critical points of $p(x)$ exist when $b > a^2/3$ and thus $p(x)$ has only one real root. \\
Note that existence of critical points of $p(x)$, warranted by $b \le a^2/3$, does not warrant three real roots. The following Lemma addresses this.

\begin{lemma}
The monic cubic polynomial $p(x) = x^3 + a x^2 + b x + c$ with $b \le a^2/3$ has three real roots, provided that $c \in [c_2, c_1]$, where $c_{1,2}$ are the roots of the quadratic equation
\b
\label{quadratic}
x^2 + \left( \frac{4}{27} \, a^3 - \frac{2}{3}\, a b \right) x  - \frac{1}{27} \, a^2 b^2  + \frac{4}{27}\, b^3 = 0,
\e
namely:
\b
\label{c12}
c_{1,2}(a,b) = c_0 \, \pm \, \frac{2}{27} \, \sqrt{( a^2 - 3 b)^3},
\e
where
\b
\label{c0}
c_0(a,b) = -\frac{2}{27} \, a^3 + \frac{1}{3} \, a b.
\e
\end{lemma}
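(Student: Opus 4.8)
The plan is to reduce the existence of three real roots to a sign condition on the discriminant $\Delta_3$ already computed in the proof of Lemma 1, and then to read off the admissible values of $c$ as an interval whose endpoints solve a quadratic. Recall that a monic cubic with real coefficients has three real roots, counted with multiplicity, if and only if $\Delta_3 \ge 0$, the equality case corresponding to the presence of a repeated root. Thus the first step is simply to impose $\Delta_3 \ge 0$ on the expression $\Delta_3 = -27 c^2 + (18 a b - 4 a^3) c + a^2 b^2 - 4 b^3$ and to treat it as a quadratic in the single variable $c$.

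Since the leading coefficient of this quadratic in $c$ equals $-27 < 0$, the parabola $c \mapsto \Delta_3(c)$ opens downwards, so $\Delta_3 \ge 0$ holds precisely for $c$ lying between, and including, the two roots of $\Delta_3 = 0$. The second step is therefore to locate these roots: dividing $\Delta_3 = 0$ through by $-27$ puts it into the monic form displayed in (\ref{quadratic}), to which I would apply the quadratic formula, obtaining exactly $c_1$ and $c_2$. This is the point at which the hypothesis $b \le a^2/3$ is essential. It gives $a^2 - 3b \ge 0$, so the discriminant of this quadratic, found in Lemma 1 to be $\Delta_2 = 16(a^2 - 3b)^3$, is nonnegative; hence $c_1$ and $c_2$ are real and $[c_2, c_1]$ is a genuine interval, degenerating to the single point $c_0$ exactly when $b = a^2/3$. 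Without this hypothesis the parabola would stay strictly below the axis, $\Delta_3 < 0$ for all $c$, which recovers Lemma 1.

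The third step is the bookkeeping that turns the quadratic formula into the stated closed forms. The vertex of the downward parabola, i.e. the half-sum of the two roots, is $-\frac{1}{2}\bigl(\frac{4}{27}a^3 - \frac{2}{3}ab\bigr) = -\frac{2}{27}a^3 + \frac{1}{3}ab$, which is precisely $c_0$ in (\ref{c0}); the half-difference of the roots is $\frac{1}{2\cdot 27}\sqrt{\Delta_2} = \frac{1}{54}\sqrt{16(a^2-3b)^3} = \frac{2}{27}\sqrt{(a^2-3b)^3}$, yielding $c_{1,2} = c_0 \pm \frac{2}{27}\sqrt{(a^2-3b)^3}$ as in (\ref{c12}). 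Assembling these, $\Delta_3 \ge 0 \iff c \in [c_2, c_1]$, which is the assertion.

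I expect no serious obstacle, as the argument is essentially a discriminant sign analysis followed by an algebraic simplification; the only points demanding care are conceptual rather than computational. First, one must be explicit that ``three real roots'' is understood with multiplicity, so that the closed endpoints $c = c_{1,2}$, where $\Delta_3 = 0$ and a root is repeated, are legitimately included, matching the closed interval $[c_2, c_1]$. Second, the simplification $\sqrt{16(a^2-3b)^3} = 4\sqrt{(a^2-3b)^3}$ must be justified, which again rests on $a^2 - 3b \ge 0$ from the hypothesis. As an optional cross-check, at $b = a^2/3$ and $c = c_0$ the polynomial becomes $(x + a/3)^3$, a triple root, consistent with the interval collapsing to the point $c_0$.
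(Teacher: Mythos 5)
Your proposal is correct and follows the same route as the paper: treat the cubic discriminant $\Delta_3$ as a downward-opening quadratic in $c$, so that $\Delta_3 \ge 0$ exactly on the closed interval between its roots, which after normalisation are the $c_{1,2}$ of (\ref{c12}). The paper states this in two lines; your version merely adds the explicit quadratic-formula bookkeeping and the observation that $b \le a^2/3$ makes the endpoints real, both of which are consistent with the paper's argument.
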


\begin{proof}
The discriminant $\Delta_3 = -27 c^2 + (18 a b - 4 a^3) c + a^2 b^2 - 4b^3$ of the monic cubic polynomial $x^3 + a x^2 + b x + c$ is positive between the roots of the equation $\Delta_3 = 0$, which is quadratic in $c$. This is exactly equation (\ref{quadratic}) and its roots are the ones given in (\ref{c12}) and (\ref{c0}).
\end{proof}

\begin{lemma}
The maximum distance between the three real roots of the monic cubic polynomial $p(x) = x^3 + a x^2 + b x + c$ is $\sqrt{12} r = (\sqrt{12}/3) \sqrt{a^2 - 3b}$. In this case, one side of the Siebeck--Marden--Northshield triangle is parallel to the abscissa. This is achieved in the case of the ``balanced" cubic --- the one with $c = c_0 = - 2 a^3/27 + a b/3$. For any other $c$ such that $c_2 \le c \le c_1$, the three real roots of the cubic lie within a shorter interval.
\end{lemma}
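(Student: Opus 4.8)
The plan is to exploit the fact that, with $a$ and $b$ held fixed and only $c$ varying, the Siebeck--Marden--Northshield triangle retains a fixed size and a fixed centroid: its side length $\alpha = (\sqrt{12}/3)\sqrt{a^2 - 3b}$ and circumradius $2r = (2/3)\sqrt{a^2-3b}$ depend only on $a$ and $b$, while its centroid sits at $(-a/3, 0)$, since the three ordinates $(x_2-x_3)/\sqrt3$, $(x_3-x_1)/\sqrt3$, $(x_1-x_2)/\sqrt3$ sum to zero. Hence, as $c$ ranges over $[c_2,c_1]$, the triangle merely rotates rigidly about $(-a/3,0)$, and the three roots are the orthogonal projections of its vertices onto the abscissa. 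Therefore the maximal distance between the roots equals the maximal extent of the orthogonal projection of a rigid equilateral triangle onto a fixed line, taken over all rotations.

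Next I would reduce this to an extremal problem for cosines. Centering the roots by $y_i = x_i + a/3$ gives $\sum_i y_i = 0$ and $\sum_i y_i^2 = \frac{2}{3}(a^2-3b)$, both fixed, so the $y_i$ lie on a fixed circle and can be written as $y_i = 2r\cos(\theta - 2\pi i/3)$, $i = 1,2,3$. The span of the roots then becomes
\[
S(\theta) = 2r\left(\max_{i}\cos\!\left(\theta - \frac{2\pi i}{3}\right) - \min_{i}\cos\!\left(\theta - \frac{2\pi i}{3}\right)\right).
\]
The core step is to show $S(\theta) \le \sqrt3\,(2r) = \sqrt{12}\,r = \alpha$, with equality precisely when $\theta - \pi/6$ is an integer multiple of $\pi/3$. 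Equivalently, and more transparently, I would invoke the classical fact that the width of an equilateral triangle ranges from its altitude $\tfrac{\sqrt3}{2}\alpha$ (projection perpendicular to a side) up to its diameter $\alpha$ (projection parallel to a side), the maximum $\alpha$ being attained exactly when the projection line is parallel to one of the sides.

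It then remains to identify the extremal orientation with $c_0$ and to secure strictness. At the maximizing angle the middle cosine vanishes, so the middle root equals $-a/3$; geometrically this is the statement that two vertices share the same ordinate, i.e.\ the side joining them is parallel to the abscissa. I would pin this to the coefficient directly: a root equals $-a/3$ iff $p(-a/3)=0$, and substitution yields $p(-a/3) = 2a^3/27 - ab/3 + c$, which vanishes exactly when $c = -2a^3/27 + ab/3 = c_0$. For the full correspondence and strictness I would use the identity $\cos A\,\cos(A-\frac{2\pi}{3})\,\cos(A-\frac{4\pi}{3}) = \frac14\cos 3A$ together with $\prod_i y_i = c_0 - c$ (the constant term of the depressed cubic in $y=x+a/3$ is $c-c_0$) to obtain $c = c_0 - 2r^3\cos 3\theta$. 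This is strictly monotone in $\theta$ on $[0,\pi/3]$, sending the endpoints to $c_2 = c_0 - \frac{2}{27}\sqrt{(a^2-3b)^3}$ and $c_1 = c_0 + \frac{2}{27}\sqrt{(a^2-3b)^3}$ and the interior point $\theta=\pi/6$ to $c_0$. Consequently $S$ attains its maximum $\alpha$ uniquely at $c=c_0$, and $S < \alpha$ for every other admissible $c$.

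The main obstacle is the middle step: rigorously establishing that $S(\theta)$ is maximized exactly at the parallel-to-a-side orientation and is strictly smaller elsewhere. Since $S$ is only piecewise sinusoidal --- the vertices realizing the maximum and the minimum change across the $\pi/3$ period --- a careful proof must track which pair of vertices attains the extremes on each subinterval and verify a single interior maximum. Invoking the convex-geometry width characterization bypasses this casework, but then the monotone relation $c = c_0 - 2r^3\cos 3\theta$ is exactly what transfers the uniqueness of the extremal orientation into the uniqueness of the extremal value $c=c_0$ within $[c_2,c_1]$.
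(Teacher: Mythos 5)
Your argument is correct and, once the width claim is secured, complete --- but it is a genuinely different and considerably heavier route than the paper's. The paper observes that the two extreme roots $x_3 \le x_1$ are the first coordinates of the vertices $R$ and $P$, so the span $x_1 - x_3$ is the orthogonal projection onto the abscissa of the \emph{single} segment $PR$, a side of the equilateral triangle of length $\alpha = (\sqrt{12}/3)\sqrt{a^2-3b}$; hence $x_1 - x_3 \le \alpha$, with equality iff $PR$ is parallel to the abscissa, i.e.\ iff $(x_2 - x_3)/\sqrt{3} = (x_1 - x_2)/\sqrt{3}$, i.e.\ iff $x_2 = -a/3$ is the midpoint of the roots, which (since the roots sum to $-a$) happens exactly for $c = c_0$. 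This completely bypasses the obstacle you flag as the main difficulty: there is no need to track which pair of vertices realises the extremes of $S(\theta)$ across the $\pi/3$ subintervals, because \emph{any} two vertices of a triangle are joined by a side, so the span of the roots is always the projection of one segment of fixed length $\alpha$, and the piecewise analysis evaporates. What your parametrization buys in exchange is a rigorous version of the rotation picture that the paper only asserts informally: your computations check out ($\sum y_i = 0$, $\sum y_i^2 = \tfrac{2}{3}(a^2-3b)$, $\prod y_i = c_0 - c$, $2r^3 = \tfrac{2}{27}\sqrt{(a^2-3b)^3}$), and the resulting identity $c = c_0 - 2r^3\cos 3\theta$ gives a genuine monotone bijection from orientations $\theta \in [0,\pi/3]$ onto $[c_2, c_1]$, which both proves the counterclockwise-rotation claim stated after the lemma and cleanly transfers uniqueness of the extremal orientation to uniqueness of $c = c_0$. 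If you keep your route, replace the appeal to the convex-geometry width characterization by the one-segment observation above; that single change removes the only step you could not make fully rigorous.
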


\begin{figure}[!ht]
\centering
\includegraphics[height=8.8cm, width=0.58\textwidth]{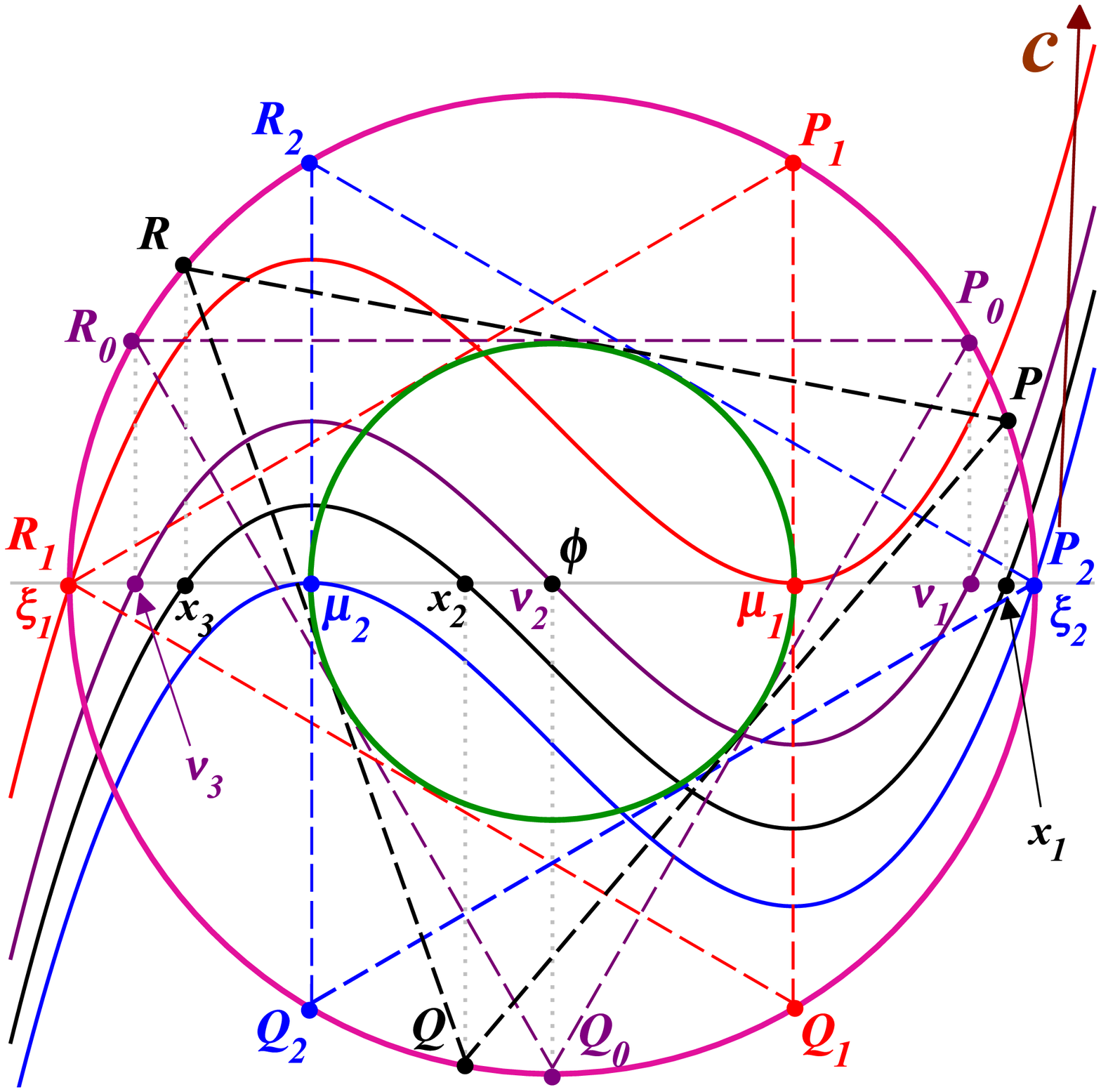}}
{\begin{minipage}{36em}
\scriptsize
\vskip.3cm
\begin{center}
{\bf Figure 2}
\end{center}
\vskip-.3cm
Presented here are four cubics and their Siebeck--Marden--Northshield triangles: the ``balanced" cubic with $c = c_0$ (second from top) whose roots are $\nu_{1,3}$, equidistant from $\phi = -a/3$, and $\nu_2 = \phi$ and whose triangle $P_0 Q_0 R_0$ has the side $P_0 R_0$ parallel to the abscissa; the two ``extreme" cubics --- with $c =  c_{1,2}$ (top and bottom) having double real roots $\mu_{1,2}$ and a simple root $\xi_{1,2}$ and whose triangles $P_{1,2} Q_{1,2} R_{1,2}$ have a side perpendicular to the abscissa and a vertex on the abscissa; and the general cubic (second from bottom)  $x^3 + a x^2 + b x + c$ with distinct real roots $x_3 < x_2 < x_1$ and triangle $PQR$. Increasing $c$ rotates the Siebeck--Marden--Northshield triangle counterclockwise about its centroid. Decreasing $c$ results in its clockwise rotation.  The isolation intervals of the roots of the latter can be immediately determined from the graph.
\end{minipage}
\end{figure}

\begin{proof}
Given that the root $x_2 = \nu_2 = \phi = -a/3$ of the ``balanced" cubic equation $x^3 + a x^2 + b x + c_0 = 0$, where $c_0 = - 2a^3/27 + a b/3$, is the midpoint between its other two roots $x_{1,3} = \nu_{1,3} = - a/3 \pm \sqrt{a^2/3 - b}$, one has $x_1 - x_2$ ($\sqrt{3}$ times the second coordinate of point $R$) being equal to $x_2 - x_3$ ($\sqrt{3}$ times the second coordinate of point $P$) --- see Figure 2. Hence $P$ and $R$ are both above the abscissa and are equidistant from it. Thus $PR$ is parallel to the abscissa. Hence, the distance between $x_3$ and $x_1$ is exactly equal to the length $\alpha = (\sqrt{12}/3) \sqrt{a^2 - 3b}$ of the side $PR$. In any other case of three real roots ($c \in [c_2, c_1]$ and $c \ne c_0$), the side $PR$ will not be parallel to the abscissa and hence the projection of $PR$ onto the abscissa will be shorter than the length of $PR$, that is, the three real roots of the cubic polynomial will lie in an interval of length smaller than $\alpha = (\sqrt{12}/3) \sqrt{a^2 - 3b}$.
\end{proof}
\n
Note that the Siebeck--Marden--Northshield triangle rotates counter-clockwise when increasing the free term $c$ and clockwise otherwise. The triangle cannot be rotated counter-clockwise or clockwise further than the triangles of the two ``extreme" cubics (with $c =  c_{1,2}$) as three real roots exist and, hence, the Siebeck--Marden--Northshield triangle exists itself, only for $c \in [c_2, c_1]$. \\
Also observe a completely geometric in nature proof that the projection of the incircle of the Siebeck--Marden--Northshield triangle coincides exactly with the interval given by the two critical points of the cubic: the incircle is invariant when varying the free term $c$ from $c_2$ to $c_1$ and this variation moves the graph up from the position of a local maximum tangent to the abscissa --- the ``extreme" cubic with $c = c_2$ (the lowermost curve on Figure 2) to a local minimum tangent to the abscissa --- the ``extreme" cubic with  $c = c_1$ (the uppermost curve on Figure 2), whose triangles are $P_{2,1} Q_{2,1} R_{2,1}$, respectively.

\begin{theorem}
The monic cubic polynomial $p(x) = x^3 + a x^2 + b x + c$, for which $b < a^2/3$ and $c \in [c_2, c_1]$,
has three real roots $x_3 \le x_2 \le x_1$, at least two of which are different and any two of which are not farther apart than $(\sqrt{12}/3) \sqrt{a^2 - 3b}$, with the following isolation intervals:
\begin{itemize}
\item [(I)] For $c_2 \le c \le c_0$: $x_3 \in [\nu_3, \mu_2], \,\, x_2 \in [\mu_2, \phi]$, and  $x_1 \in [\nu_1, \xi_2]$.

\item [(II)] For $c_0 \le c \le c_1$: $x_3 \in [\xi_1, \nu_3], \,\, x_2 \in [\phi,\mu_1]$, and  $x_1 \in [\mu_1, \nu_1]$,

\end{itemize}
where:
\begin{itemize}
\item [(i)] $\mu_{1,2}$ is the double root and $\xi_{1,2}$ is the simple root of $p_{1,2}(x) = x^3 + a x^2 + b x + c_{1,2}$, that is, $\mu_{1,2}$ are the roots of $p'(x) = 3 x^2 + 2 a x + b = 0$, namely: $\mu_{1,2} = -a/3 \pm r = -a/3 \pm (1/3) \sqrt{a^2 - 3 b}$ and $\xi_{1,2} =  - a - 2 \mu_{1,2} = - a/3 \mp 2r = -a/3 \mp (2/3) \sqrt{a^2 - 3 b}$.

\item [(ii)] $\nu_{1,2,3}$ are the roots of the ``balanced" cubic equation $p_0(x) = x^3 + a x^2 + b x + c_0$, namely: $\nu_{1,3} = -a/3 \pm \alpha/2 = -a/3 \pm (\sqrt{3}/3) \sqrt{a^2 - 3 b}$ and $\nu_2 = \phi = - a/3$.
\end{itemize}
\end{theorem}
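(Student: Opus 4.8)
The plan is to regard the three roots as functions of the free term $c$, with $a$ and $b$ held fixed, and to read off the isolation intervals from the monotone motion of each root together with the root configurations already known at the three distinguished values $c = c_2$, $c = c_0$ and $c = c_1$. Concretely, writing $p(x) = h(x) + c$ with $h(x) = x^3 + a x^2 + b x$, the equation $p(x) = 0$ becomes $h(x) = -c$, so the real roots are exactly the abscissae at which the graph of $h$ meets the horizontal line of height $-c$. Since $h'(x) = 3 x^2 + 2 a x + b = p'(x)$ has two roots $\mu_2 < \mu_1$ (distinct because $b < a^2/3$), the function $h$ increases on $(-\infty, \mu_2]$, decreases on $[\mu_2, \mu_1]$, and increases on $[\mu_1, \infty)$, with local maximum value $h(\mu_2)$ and local minimum value $h(\mu_1)$. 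The double-root conditions $p(\mu_{1,2}) = 0$ at the extreme cubics give $c_2 = -h(\mu_2)$ and $c_1 = -h(\mu_1)$, so for $c$ strictly inside $(c_2, c_1)$ the line cuts each of the three monotone branches once, producing the three simple roots $x_3 \le x_2 \le x_1$ with $x_3 \le \mu_2 \le x_2 \le \mu_1 \le x_1$; the strict inequality $b < a^2/3$ simultaneously forbids a triple root, which gives the ``at least two are different'' clause.

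The engine of the proof is the monotonicity of each root in $c$. Differentiating $h(x_i(c)) = -c$ yields $x_i'(c) = -1/h'(x_i)$, so on the two increasing branches (carrying $x_3$ and $x_1$) the root decreases in $c$, while on the decreasing branch (carrying $x_2$) the root increases in $c$. Hence, as $c$ runs from $c_2$ up to $c_1$, the outer roots $x_3$ and $x_1$ slide monotonically leftward and the middle root $x_2$ slides monotonically rightward. It then remains only to pin down the three snapshots. At $c = c_2$ the roots are $(\mu_2, \mu_2, \xi_2)$, at $c = c_0$ they are the balanced-cubic roots $(\nu_3, \phi, \nu_1)$, and at $c = c_1$ they are $(\xi_1, \mu_1, \mu_1)$; all of these are supplied by parts (i) and (ii) of the statement.

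Feeding these endpoints into the monotonicity gives item (I): for $c \in [c_2, c_0]$, $x_3$ decreases from $\mu_2$ to $\nu_3$, $x_2$ increases from $\mu_2$ to $\phi$, and $x_1$ decreases from $\xi_2$ to $\nu_1$; the interval $[c_0, c_1]$ produces item (II) identically, with $x_3$ running from $\nu_3$ to $\xi_1$, $x_2$ from $\phi$ to $\mu_1$, and $x_1$ from $\nu_1$ to $\mu_1$. A one-line sign check using $r = (1/3)\sqrt{a^2 - 3b}$ confirms the global ordering $\xi_1 < \nu_3 < \mu_2 < \phi < \mu_1 < \nu_1 < \xi_2$, which is what makes the stated intervals nest correctly and consistently across the common boundary $c = c_0$.

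The one delicate point is the behaviour at the two extreme values $c = c_2$ and $c = c_1$, where $h'$ vanishes at the coalescing pair of roots and the relation $x_i'(c) = -1/h'(x_i)$ degenerates. I would therefore establish strict monotonicity only on the open interval, where all three roots are simple, and extend the conclusion to the closed endpoints by continuity of the roots as functions of the coefficients. This is the main obstacle, but it is mild: as $c$ increases past $c_2$ the line $y = -c$ drops just below the local maximum at $\mu_2$, so the double root at $\mu_2$ splits into one root moving left along the increasing branch and one moving right along the decreasing branch, which is precisely the continuous limit required, and symmetrically at $c_1$. Alternatively, the entire conclusion can be read off geometrically: increasing $c$ rotates the Siebeck--Marden--Northshield triangle counter-clockwise about its centroid $\phi = -a/3$, and the ordered abscissa-projections of its vertices trace exactly the monotone root motion described above.
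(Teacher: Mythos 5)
Your proposal is correct, but it proves the theorem by a genuinely different route from the paper. The paper's proof is essentially geometric: after computing $\mu_{1,2}$, $\xi_{1,2}$ via Vi\`ete's formulas and identifying the roots of the ``balanced'' cubic, it observes that increasing $c$ rotates the Siebeck--Marden--Northshield triangle counter-clockwise between the two ``extreme'' positions $P_2Q_2R_2$ and $P_1Q_1R_1$, and then states that the isolation intervals ``are easily read geometrically'' from Figure 2. You instead make the root motion analytic: writing $p(x)=h(x)+c$ with $h(x)=x^3+ax^2+bx$, you locate the three roots on the three monotone branches of $h$ separated by $\mu_2<\mu_1$, differentiate $h(x_i(c))=-c$ to get $x_i'(c)=-1/h'(x_i)$, conclude that the outer roots decrease and the middle root increases in $c$, and sandwich each root between its values at the snapshots $c=c_2,\,c_0,\,c_1$ (handling the degenerate endpoints by continuity). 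This buys a self-contained, figure-free verification of exactly the interval endpoints the paper reads off the rotating triangle, and your closing remark correctly identifies the rotation picture as the geometric shadow of the same monotonicity. One small omission: you never address the clause that no two roots are farther apart than $(\sqrt{12}/3)\sqrt{a^2-3b}$; your interval bounds alone give only the weaker $\xi_2-\nu_3=(2+\sqrt{3})r$, so you should either cite Lemma~3 (as the paper does) or note that $x_1-x_3$ has derivative $1/h'(x_3)-1/h'(x_1)$ vanishing at the symmetric configuration $c=c_0$, where the distance equals $\sqrt{12}\,r$, versus $3r$ at $c=c_{1,2}$.
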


\begin{proof}
Due to Lemma 2, the discriminant $\Delta_3 = -27 c^2 + (18 a b - 4 a^3) c + a^2 b^2 - 4b^3$ of the monic cubic polynomial $x^3 + a x^2 + b x + c$ is
non-negative for all $a$ and $b \le a^2/3$, if $c$ is between the roots $c_{1,2} = c_0 \pm (2/27) \sqrt{( a^2 - 3 b)^3}$ (with $c_0= - 2 a^3/27 + a b/3$) of the quadratic equation $x^2 + (4 a^3/27 - 2 a b/3 ) x  - a^2 b^2/27  + 4 b^3/27 = 0$. Then $x^3 + a x^2 + b x + c$ will have three real roots. The two ``extreme" cases, the cubics $x^3 + a x^2 + b x + c_1$ and $x^3 + a x^2 + b x + c_2$, will each have a double root (as $\Delta_3$ vanishes for $c = c_{1,2}$) and a simple root. Otherwise, for $c_2 < c < c_1$, the cubic polynomial will have three distinct roots. \\
If $\mu_{1,2}$ is the double root of the ``extreme" cubic $x^3 + a x^2 + b x + c_{1,2}$ and $\xi_{1,2}$ --- the corresponding simple root, then, when $c = c_{1,2}$, one has (due to Vi\`ete formul\ae): $2 \mu_i + \xi_i = - a, \,\, \mu_i^2 + 2 \mu_i \xi_i = b,$ and $\mu_i^2 \xi_i = -c$ (for $i = 1, 2$).
Expressing from the first $\xi_i = - a - 2 \mu_i$ and substituting into the second yields $-3 \mu_i^2 - 2 a \mu_i - b = 0$, that is, the double roots $\mu_{1,2}$ of each of the ``extreme" cubics  $x^3 + a x^2 + b x + c_{1,2}$ are the roots of the quadratic equation $3x^2 + 2 a x + b = 0$, that is
$\mu_{1,2} = -a/3 \pm r = -a/3 \pm (1/3) \sqrt{a^2 - 3 b}$. Hence one finds: $\xi_{1,2}  =  - a - 2 \mu_{1,2} = - a/3 \mp 2r = -a/3 \mp (2/3) \sqrt{a^2 - 3 b}$. \\
Due to Lemma 3, the biggest distance between the roots of the cubic will be $\alpha = (\sqrt{12}/3) \sqrt{a^2 - 3b}$. \\
The roots of the ``balanced" cubic equation $x^3 + a x^2 + b x - 2a^3/27 + a b/3 = 0$ (see the proof of Lemma 3) are symmetric with respect to the centre of the inscribed circle: $\nu_3 = -a/3 - \sqrt{a^2/3 - b}$, $\nu_2 = \phi = -a/3$, and $\nu_1 = - a/3 + \sqrt{a^2/3 - b}$. The ``balanced" equation has triangle $P_0 Q_0 R_0$ and the side $P_0 R_0$ is parallel to the abscissa (Figure 2). \\
When $c = c_1 > c_0$, the Siebeck--Marden--Northshield triangle is $P_1 Q_1 R_1$ and its side $P_1 Q_1$ is perpendicular to the abscissa. Hence the roots $x_2$ and $x_1$ coalesce into the double root $\mu_1$. The vertex $R_1$ is on the abscissa at the smallest root $\xi_1$ (Figure 2). \\
When $c = c_2 < c_0$, the Siebeck--Marden--Northshield triangle is $P_2 Q_2 R_2$ and its side $R_2 Q_2$ is perpendicular to the abscissa. The roots $x_3$ and $x_2$ coalesce into the double root $\mu_2$, while the biggest root $x_1$ is equal to $\xi_2$, as the vertex $P_2$ is on the abscissa at $\xi_2$ (Figure 2). \\
The isolation intervals of the roots of the cubic polynomial are then easily read geometrically --- see Figure 2.
\end{proof}

\n
The lengths of the isolation intervals of the roots are as follows:
\begin{itemize}
\item [(I)] \underline{\bm{$c_2 \le c \le c_0$}} \\
For the smallest root $x_3$, the length is $\mu_2 - \nu_3 = [(\sqrt{3} - 1)/3 ] \, \sqrt{a^2 - 3b}$; for the middle root $x_2$ one has $\phi - \mu_2 = (1/3) \sqrt{a^2 - 3b}$; and for the largest root $x_1$ it is $\xi_2 - \mu_1 = [(2 - \sqrt{3})/3 ] \, \sqrt{a^2 - 3b}$.
\item [(II)] \underline{\bm{$c_0 \le c \le c_1$}} \\
For the smallest root $x_3$, the length is $\mu_3 - \xi_1 = [(2 - \sqrt{3})/3 ] \, \sqrt{a^2 - 3b}$; for the middle root $x_2$ one has $\phi - \mu_2 = (1/3) \sqrt{a^2 - 3b}$; and for the largest root $x_1$ it is $\xi_2 - \mu_1 = [(\sqrt{3} - 1)/3 ] \, \sqrt{a^2 - 3b}$.
\end{itemize}
\begin{theorem}
The monic cubic polynomial $p(x) = x^3 + a x^2 + b x + c$, for which $b < a^2/3$ and:
\begin{itemize}
\item [(I)] $c < c_2$, has only one real root: $x_1 > \xi_2 = - a - 2 \mu_2 = - a/3 + 2r  = - a/3 + (2/3) \sqrt{a^2 - 3 b}$ (it can be bounded from above by a polynomial root bound);
\item [(II)] $c > c_1$, has only one real root: $x_1 < \xi_1 =  - a - 2 \mu_1 = - a/3 - 2r  = - a/3 - (2/3) \sqrt{a^2 - 3 b}$ (it can be bounded from below by a polynomial root bound).
\end{itemize}
\end{theorem}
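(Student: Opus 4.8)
The plan is to argue in two stages: first fix the number of real roots, then locate the single real root relative to $\xi_{1,2}$ by a one-line evaluation of $p$. For the count, I would invoke Lemma 2: its discriminant $\Delta_3 = -27 c^2 + (18 a b - 4 a^3) c + a^2 b^2 - 4 b^3$ is a downward parabola in $c$ whose roots are $c_2 \le c_1$, so $\Delta_3 < 0$ exactly when $c < c_2$ or $c > c_1$ (the inequality being strict since $b < a^2/3$ forces $c_2 < c_1$). A real cubic with negative discriminant has one real root and a complex-conjugate pair, which settles the first assertion of each of (I) and (II).

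The key observation is that the free term merely translates the graph vertically: for every $c$ one has $p(x) = p_{c_{1,2}}(x) + (c - c_{1,2})$, where $p_{c_{1,2}}(x) = x^3 + a x^2 + b x + c_{1,2}$ are the two ``extreme'' cubics of Theorem 1. Since $\xi_1$ and $\xi_2$ are, by construction, the simple roots of $p_{c_1}$ and $p_{c_2}$ respectively, evaluating at these points collapses everything to the constant shift:
\[ p(\xi_2) = p_{c_2}(\xi_2) + (c - c_2) = c - c_2 , \qquad p(\xi_1) = p_{c_1}(\xi_1) + (c - c_1) = c - c_1 . \]
This identity is the crux of the argument; with it in hand the localisation is immediate from the Intermediate Value Theorem and the uniqueness already established.

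For (I), $c < c_2$ gives $p(\xi_2) = c - c_2 < 0$, whereas $p(x) \to +\infty$ as $x \to +\infty$; the unique real root therefore lies in $(\xi_2, +\infty)$, i.e. $x_1 > \xi_2 = -a/3 + 2r$. Symmetrically, for (II), $c > c_1$ gives $p(\xi_1) = c - c_1 > 0$ while $p(x) \to -\infty$ as $x \to -\infty$, so the unique real root lies in $(-\infty, \xi_1)$, i.e. $x_1 < \xi_1 = -a/3 - 2r$. Geometrically this simply records that driving $c$ below $c_2$ (above $c_1$) sinks the local maximum under (lifts the local minimum above) the abscissa, leaving only the increasing right-hand (left-hand) branch to cross it. The parenthetical two-sided estimate then follows from any classical root bound, e.g. the Cauchy bound $|x_1| \le 1 + \max\{|a|,|b|,|c|\}$, applied on the relevant side.

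The only place demanding a little care --- and the closest thing to an obstacle --- is the orientation bookkeeping: one must confirm that $\xi_2 = -a/3 + 2r$ sits to the right of both critical points (so the branch surviving for $c < c_2$ is the one on which $p' > 0$) and that $\xi_1 = -a/3 - 2r$ sits to the left. Both placements are already pinned down by the Vi\`ete relations $\xi_{1,2} = -a - 2\mu_{1,2}$ with $\mu_{1,2} = -a/3 \pm r$ from Theorem 1, so once these are quoted the proof reduces to the displayed evaluation and a sign inspection.
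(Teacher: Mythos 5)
Your proof is correct, and it is noticeably more rigorous than the one in the paper. The paper's own proof of this theorem is essentially pictorial: it points to Figure~3, observes that increasing or decreasing $c$ translates the graph vertically past the two ``extreme'' cubics $x^3+ax^2+bx+c_{1,2}$, and reads the conclusions $x_1<\xi_1$ (resp.\ $x_1>\xi_2$) off the drawing, together with an implicit appeal to the sign of the discriminant for the root count. You capture the same underlying idea --- that $c$ only shifts the graph vertically --- but convert it into a two-line analytic argument: the identity $p(\xi_{2})=p_{c_2}(\xi_2)+(c-c_2)=c-c_2$ (and its twin at $\xi_1$), combined with the end behaviour of $p$, the Intermediate Value Theorem, and the uniqueness of the real root already guaranteed by $\Delta_3<0$. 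This buys a proof that stands without the figure and that automatically yields strictness of the inequalities, since $p(\xi_{1,2})=c-c_{1,2}\neq 0$. One small remark: your closing paragraph on ``orientation bookkeeping'' is superfluous --- once you know the real root is unique and the IVT places a root in $(\xi_2,+\infty)$ (resp.\ $(-\infty,\xi_1)$), no information about the position of $\xi_{1,2}$ relative to the critical points $\mu_{1,2}$ is needed.
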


\begin{proof}
Given on Figure 3 are the two ``extreme" cubics --- with $c = c_1$ (second from top) and with $c = c_2$ (second from bottom). Their corresponding triangles are $P_1 Q_1 R_1$ and $P_2 Q_2 R_2$, respectively. Each of these cubics has a double root $\mu_{1,2}$ and a simple root $\xi_{1,2}$, respectively. Cubics with $c$ such that $c_2 < c < c_1$ are between those two and they are the only ones with three distinct real roots. When $c > c_1$ (uppermost cubic), there is a pair of complex conjugate roots and a single real  root $x_1 < \xi_1 = - a/3 - (2/3) \sqrt{a^2 - 3 b}$. When $c < c_2$ (lowermost cubic), there is a pair of complex conjugate roots and a single real root $x_1 > \xi_2 = - a/3 + (2/3) \sqrt{a^2 - 3 b}$. The isolation intervals of the single real root for either of the two latter cubics can be found by the determination of the lower (respectively, upper) root bound of the cubic.
\end{proof}

\n
As polynomial upper root bound, one can take one of the many existing root bounds. For example, it could be the bigger of 1 and the sum of the absolute values of all negative coefficients \cite{1}. Or one can consider the bound \cite{kur}:  $1 + \sqrt[k]{H}$, where $k = 1$ if $a < 0, \,\, k = 2$ if $a > 0$ and $b <0$, and $k = 3$ if $a > 0$ and $b > 0,$ and $c < 0$ (if $a$, $b$, and $c$ are all positive, the upper root bound is zero). $H$ is the biggest absolute value of all negative coefficients in $x^3 + a x^2 + b x + c$. \\
The lower root bound is the negative of the upper root bound of $-x^3 + a x^2 - b x + c$.

\begin{center}
\begin{tabular}{cc}
\includegraphics[width=67mm]{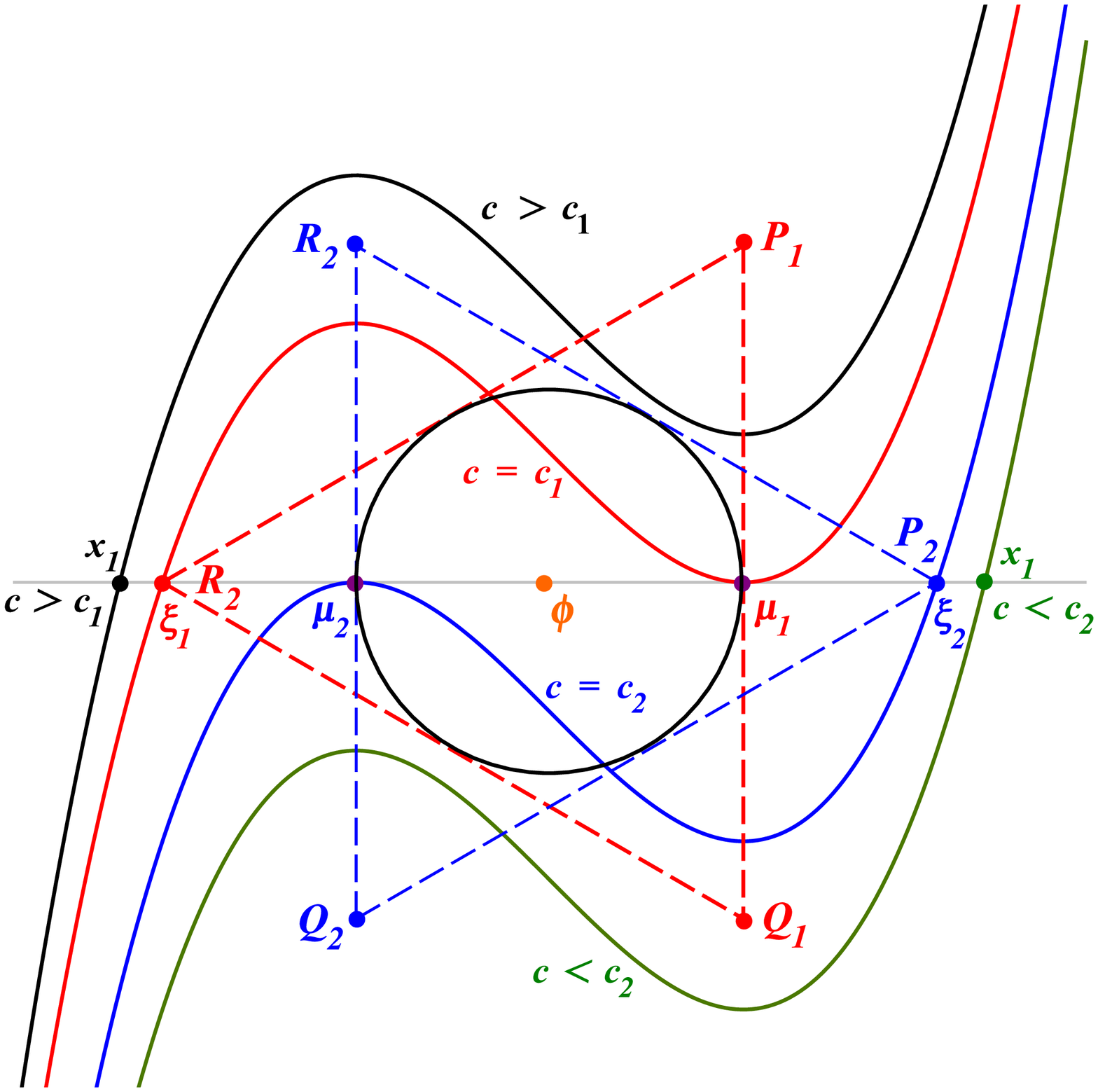} & \includegraphics[width=67mm]{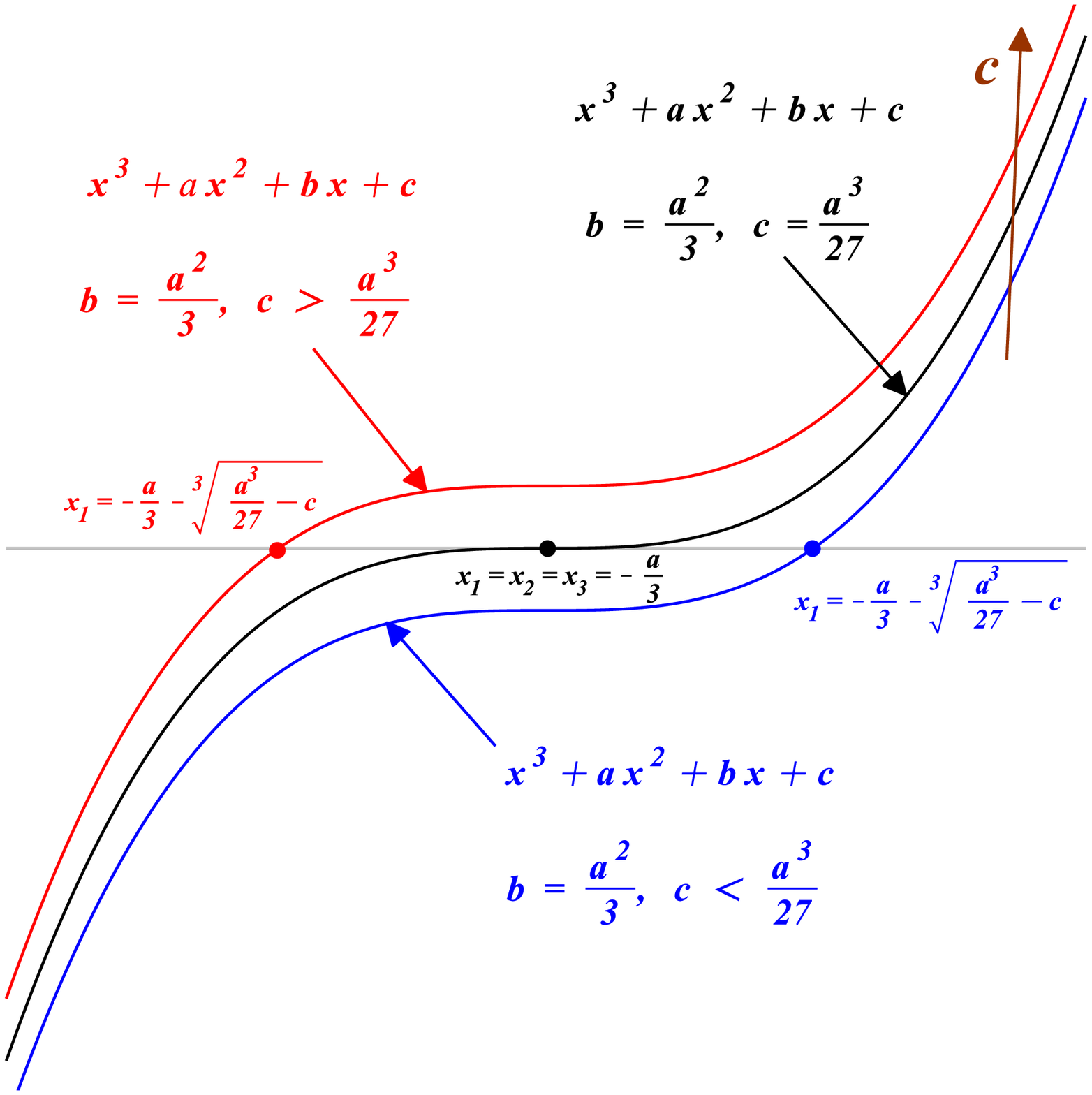} \\
{\scriptsize {\bf Figure 3}} &  {\scriptsize {\bf Figure 4}} \\
{\scriptsize {\it Theorem 2}} & {\scriptsize {\it Theorem 3}} \\
& \\
\multicolumn{1}{c}{\begin{minipage}{18em}
\scriptsize
\vskip-0.74cm
When $b < a^2/3$ and:
\begin{itemize}
\item [(I)] $c < c_2$, the cubic has only one real root: $x_1 > \xi_2 = - a - 2 \mu_2 = - a/3 + 2r  = - a/3 + (2/3) \sqrt{a^2 - 3 b}$;
\item [(II)] $c > c_1$, the cubic has only one real root: $x_1 < \xi_1 =  - a - 2 \mu_1 = - a/3 - 2r  = - a/3 - (2/3) \sqrt{a^2 - 3 b}$.
\end{itemize}
\end{minipage}}
& \multicolumn{1}{c}{\begin{minipage}{18em}
\scriptsize
\vskip-.4cm
When $b = a^2/3$ and:
\begin{itemize}
\item [(I)] $c < (1/27) a^3$, the cubic has only one real root: $x_1 = -a/3 + \sqrt[3]{a^3/27 - c} \, > \, -a/3$;
\item [(II)] $c = (1/27) a^3$, the cubic has a triple real root: $x_1 = x_2 = x_3 = -a/3$;
\item [(III)] $c > (1/27) a^3$, the cubic has only one real root: $x_1 = -a/3 + \sqrt[3]{a^3/27 - c} \, < \, -a/3$.
\end{itemize}
\end{minipage}}
\\
\end{tabular}
\end{center}

\begin{theorem}
The monic cubic polynomial $p(x) = x^3 + a x^2 + b x + c$, for which $b = a^2/3$ and:
\begin{itemize}
\item [(I)] $c < (1/27) a^3$, has only one real root: $x_1 = -a/3 + \sqrt[3]{a^3/27 - c} \, > \, -a/3$;
\item [(II)] $c = (1/27) a^3$, has a triple real root: $x_1 = x_2 = x_3 = -a/3$;
\item [(III)] $c > (1/27) a^3$, has only one real root: $x_1 = -a/3 + \sqrt[3]{a^3/27 - c} \, < \, -a/3$.
\end{itemize}
\end{theorem}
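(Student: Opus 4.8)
The plan is to reduce the cubic to a pure power of a shifted variable via the depression substitution $x = y - a/3$, which places the inflection point at the origin. With the hypothesis $b = a^2/3$, I expect both the quadratic and the linear terms of the transformed polynomial to collapse, leaving an especially simple equation from which all three cases can be read off directly.

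First I would substitute $x = y - a/3$ and collect terms. The $y^2$ coefficient vanishes identically (as it must for any depressed cubic), the surviving linear coefficient is $b - a^2/3$, which is exactly zero under the hypothesis, and the constant term works out to $c - a^3/27$. Hence $p(x) = 0$ is equivalent to
\begin{equation}
y^3 = \frac{a^3}{27} - c, \qquad y = x + \frac{a}{3}.
\end{equation}
From here the three cases are immediate. The real equation $y^3 = k$ has the unique real solution $y = \sqrt[3]{k}$ together with a complex-conjugate pair when $k \neq 0$, and the single solution $y = 0$ with multiplicity three when $k = 0$. Setting $k = a^3/27 - c$ and translating back gives $x_1 = -a/3 + \sqrt[3]{a^3/27 - c}$ throughout, with the position of $x_1$ relative to $-a/3$ governed by the sign of $a^3/27 - c$: positive for $c < a^3/27$ (Case I, so $x_1 > -a/3$), zero for $c = a^3/27$ (Case II, the triple root $x_1 = x_2 = x_3 = -a/3$), and negative for $c > a^3/27$ (Case III, so $x_1 < -a/3$).

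There is no genuine obstacle in this argument: the entire content lies in the algebraic identity that the linear coefficient equals $b - a^2/3$ and therefore vanishes precisely on the boundary $b = a^2/3$. The only point worth emphasising is the consistency with the geometric picture developed earlier, which serves as a sanity check. The condition $b = a^2/3$ forces $a^2 - 3b = 0$, so the incircle radius $r = (1/3)\sqrt{a^2 - 3b}$ and the side length $\alpha = (\sqrt{12}/3)\sqrt{a^2 - 3b}$ of the Siebeck--Marden--Northshield triangle both vanish; the two critical points $\mu_{1,2}$ merge into the single inflection point $-a/3$; and $p(x)$ becomes monotone with a stationary inflection there, admitting exactly one real root unless that inflection point is itself the root, which is exactly the trichotomy recorded in the statement.
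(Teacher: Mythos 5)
Your argument is correct and is essentially the paper's own: the paper likewise observes that $c_1 = c_2 = a^3/27$ here and then completes the cube, $x^3 + ax^2 + (a^2/3)x + c = (x + a/3)^3 - a^3/27 + c$, which is exactly your depression substitution $y = x + a/3$. The only cosmetic difference is that the paper leans on Figure~4 and the degenerate-triangle picture for the case split, whereas you read all three cases directly off $y^3 = a^3/27 - c$; the mathematical content is identical.
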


\begin{proof}
Shown on Figure 4 is the special case of $b = a^2/3$. One immediately gets that $c_1 = c_2 = a^3/27$ in this case. The only cubic with three real roots is the one with $c = a^3/27$. This is the cubic $x^3 + a x^2 + (a^2/3) x + a^3/27 = (x + a/3)^3$ (middle curve). Clearly, this cubic has a triple real root $x_1 = x_2 = x_3 = - a/3$. If one increases $c$ above $a^3/27$ (top cubic), there is a pair of complex conjugate roots and a single root $x_1 < - a/3$. If one increases $c$ above $a^3/27$ (bottom cubic), there is a pair of complex conjugate roots and a single root $x_1 > - a/3$. The single real root for either of the two latter cubics can be immediately found completing the cube: $x^3 + ax^2 + (a^2/3)x + c = (x + a/3)^3 - a^3/27 + c$. Hence, $x_1 = -a/3 + \sqrt[3]{a^3/27 - c}$.
\end{proof}

\begin{theorem}
The only real root $x_1$ of the monic cubic polynomial $p(x) = x^3 + a x^2 + b x + c$ with $b > a^2/3$ (due to Lemma 1) has the following isolation interval:
\begin{itemize}
\item [(I)] If $a \ge 0$ and $c \le 0: \,\, 0 \le x_1 \le -c/b$.
\item [(II)] If $a \ge 0$ and $c > 0: \,\,$  min$\{-a, -c/b\} \le x_1 \le $ max$\{-a, -c/b\}$.
\item [(III)] If $a < 0$ and $c < 0: \,\,$  min$\{-a, -c/b\} \le x_1 \le $ max$\{-a, -c/b\}$.
\item [(IV)] If $a < 0$ and $c \ge 0: \,\, -c/b \le x_1 \le 0$.
\end{itemize}
\end{theorem}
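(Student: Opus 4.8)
The plan is to exploit that, under the hypothesis $b > a^2/3$, the polynomial $p$ is strictly monotone, so that isolating its unique real root reduces entirely to a sign check at the proposed endpoints. First I would record that $b > a^2/3 \ge 0$ forces $b > 0$, and that the derivative $p'(x) = 3 x^2 + 2 a x + b$ has negative discriminant $4(a^2 - 3b) < 0$ together with positive leading coefficient, so $p'(x) > 0$ for all real $x$. Hence $p$ is strictly increasing on the real line and, by Lemma 1, has exactly one real root $x_1$. The crucial consequence is monotone localisation: for any $\ell \le u$, if $p(\ell) \le 0$ and $p(u) \ge 0$ then $\ell \le x_1 \le u$, since $p(\ell) \le p(x_1) = 0 \le p(u)$ and $p$ is increasing.

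Next I would evaluate $p$ at the three values that occur as endpoints, namely $0$, $-a$, and $-c/b$ (the last being the zero of the linear part $b x + c$). A direct computation gives $p(0) = c$, $p(-a) = c - a b$, and
\[
p\!\left(-\frac{c}{b}\right) \; = \; \frac{c^2}{b^3}\,(a b - c).
\]
Since $b > 0$, the sign of $p(0)$ is that of $c$, the sign of $p(-a)$ is that of $c - a b$, and the sign of $p(-c/b)$ is that of $a b - c$ (with a zero exactly when $c = 0$). The key structural observation is that $c - a b$ and $a b - c$ are negatives of one another, so $p(-a)$ and $p(-c/b)$ always carry opposite signs; this is precisely what will force $x_1$ to lie between $-a$ and $-c/b$ in the two ``mixed'' cases.

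With these three sign computations in hand, each case follows by inspection. In case (I), with $a \ge 0$ and $c \le 0$, one has $p(0) = c \le 0$ giving $x_1 \ge 0$, while $a b - c \ge 0$ gives $p(-c/b) \ge 0$ and hence $x_1 \le -c/b$; case (IV), with $a < 0$ and $c \ge 0$, is the mirror image, using $p(0) = c \ge 0$ and $a b - c \le 0$ to obtain $-c/b \le x_1 \le 0$. In case (II), with $a \ge 0$ and $c > 0$, both $-a \le 0$ and $-c/b < 0$, and the opposite signs of $p(-a)$ and $p(-c/b)$ place $x_1$ between them, that is, in $[\min\{-a, -c/b\}, \max\{-a, -c/b\}]$; case (III), with $a < 0$ and $c < 0$, is identical but with both endpoints now positive. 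I expect no genuine obstacle here: the only steps requiring any care are the evaluation and factorisation of $p(-c/b)$ and the observation that $b > 0$ (so that $c^2/b^3 > 0$) is what pins down every sign, while the degenerate sub-cases in which an endpoint coincides with $x_1$ (for instance $c = a b$ in (II) and (III), or $c = 0$ in (I) and (IV)) are harmlessly absorbed by the non-strict inequalities in the stated intervals.
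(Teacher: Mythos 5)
Your proof is correct, and it takes a genuinely different route from the paper. The paper's argument rewrites the equation as the intersection problem $x^2(x+a) = -bx - c$ and then reads the four cases off Figures 5--8: the relevant abscissas $0$, $-a$, and $-c/b$ appear there as the zeros of the two ``split'' pieces, and the location of the intersection point relative to them is inferred pictorially. You instead observe that $b > a^2/3 \ge 0$ makes $p'$ positive-definite, so $p$ is strictly increasing, and then localise the unique root purely by sign evaluation: $p(0) = c$, $p(-a) = c - ab$, and $p(-c/b) = (c^2/b^3)(ab - c)$, with the last two always of opposite sign. This buys a fully self-contained, figure-free verification of all four cases (including the degenerate subcases $c = 0$ and $c = ab$, which the non-strict inequalities absorb), at the cost of losing the geometric picture that motivates \emph{why} these particular endpoints arise --- in the paper they are forced on you as the roots of $x^2(x+a)$ and of $bx + c$. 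One could say your argument is the rigorous skeleton of what the figures display; if anything, it would strengthen the paper to include it alongside the graphical discussion.
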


\begin{proof}
Re-write the cubic equation $x^3 + a x^2 + b x + c = 0$ as $x^3 + a x^2 = - b x - c$. Such ``split" of polynomial equations of different degrees has been proposed and studied in \cite{2, 3, 4} \\
The rest of the proof is graphic --- see the captions of Figures 5--8 for the four cases {\it (I)--(IV)} respectively.
\end{proof}
\begin{center}
\begin{tabular}{cc}
\includegraphics[width=40mm]{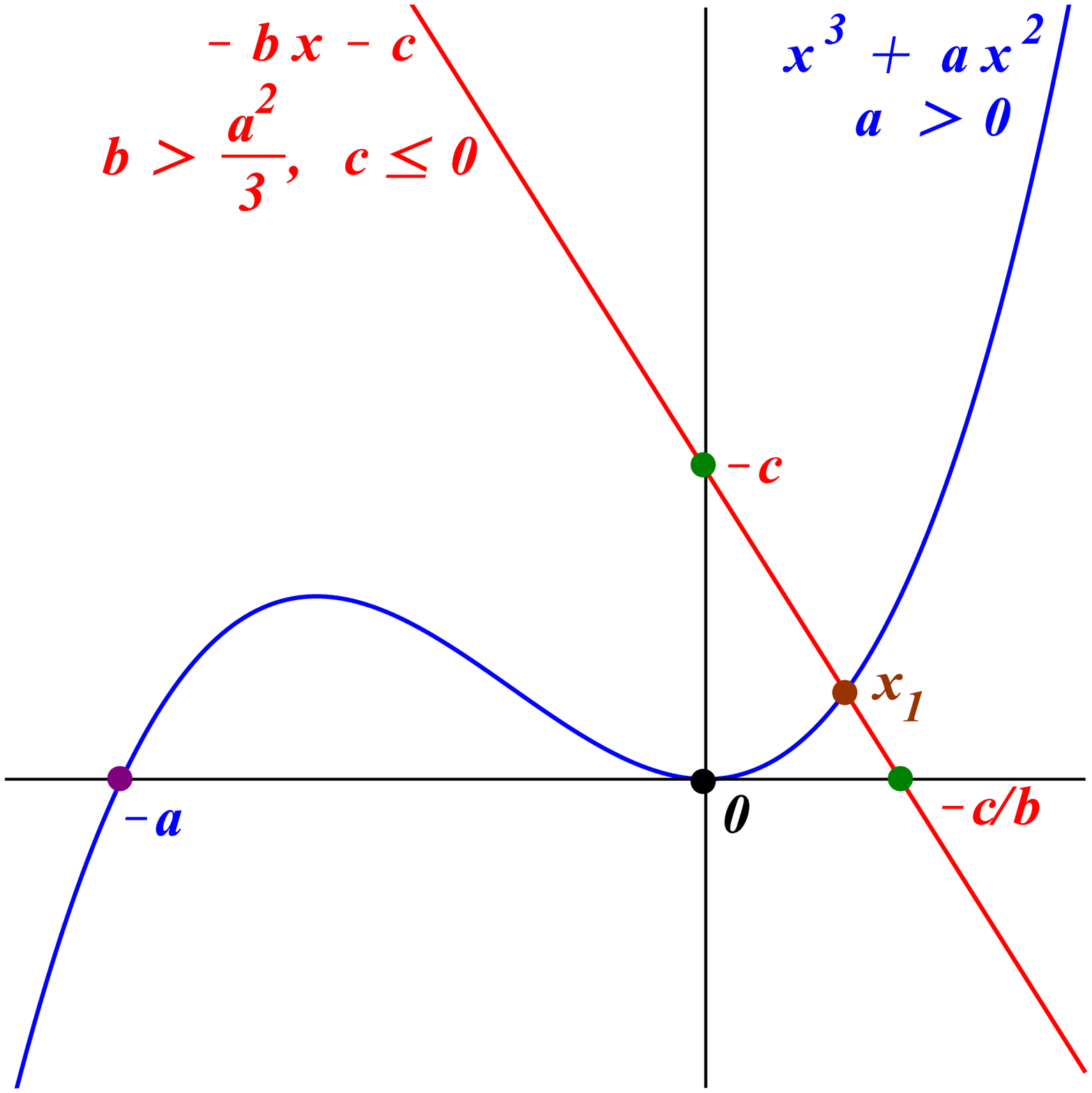} & \includegraphics[width=40mm]{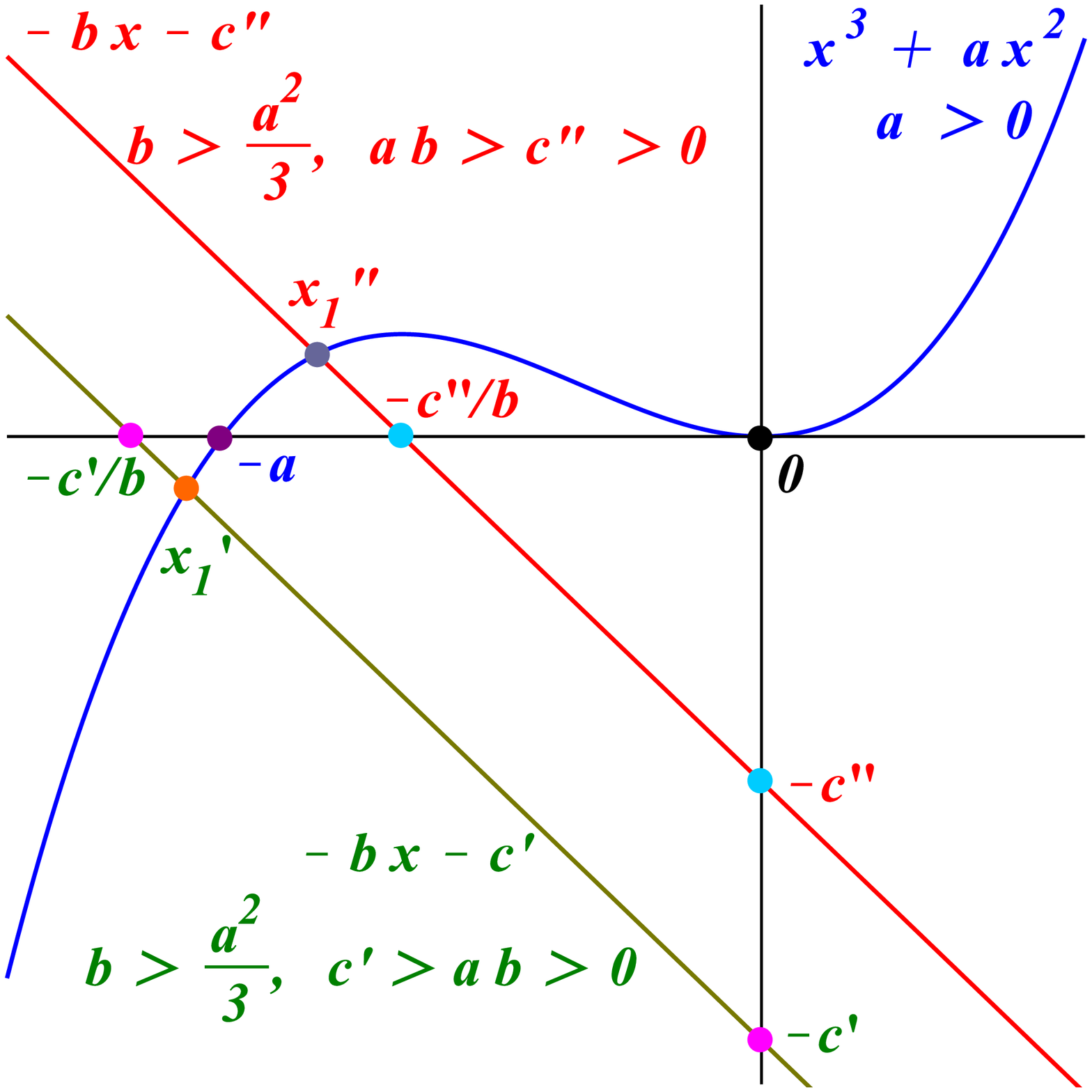} \\
{\scriptsize {\bf Figure 5}} &  {\scriptsize {\bf Figure 6}} \\
{\scriptsize {\it Proof of Theorem 4(I)}} & {\scriptsize {\it Proof of Theorem 4(II)}} \\
& \\
\multicolumn{1}{c}{\begin{minipage}{18em}
\scriptsize
\vskip-.45cm
When $a \ge 0$ and $c \le 0$, the isolation interval of the single root $x_1$ is: $0 \le x_1 \le -c/b$.
\end{minipage}}
& \multicolumn{1}{c}{\begin{minipage}{18em}
\scriptsize
\vskip-.15cm
When $a \ge 0$ and $c > 0$, the isolation interval of the single root $x_1$ is: min$\{-a, -c/b\} \le x_1 \le $ max$\{-a, -c/b\}$.
\end{minipage}}
\\
\\
\end{tabular}
\end{center}

\begin{center}
\begin{tabular}{cc}
\includegraphics[width=40mm]{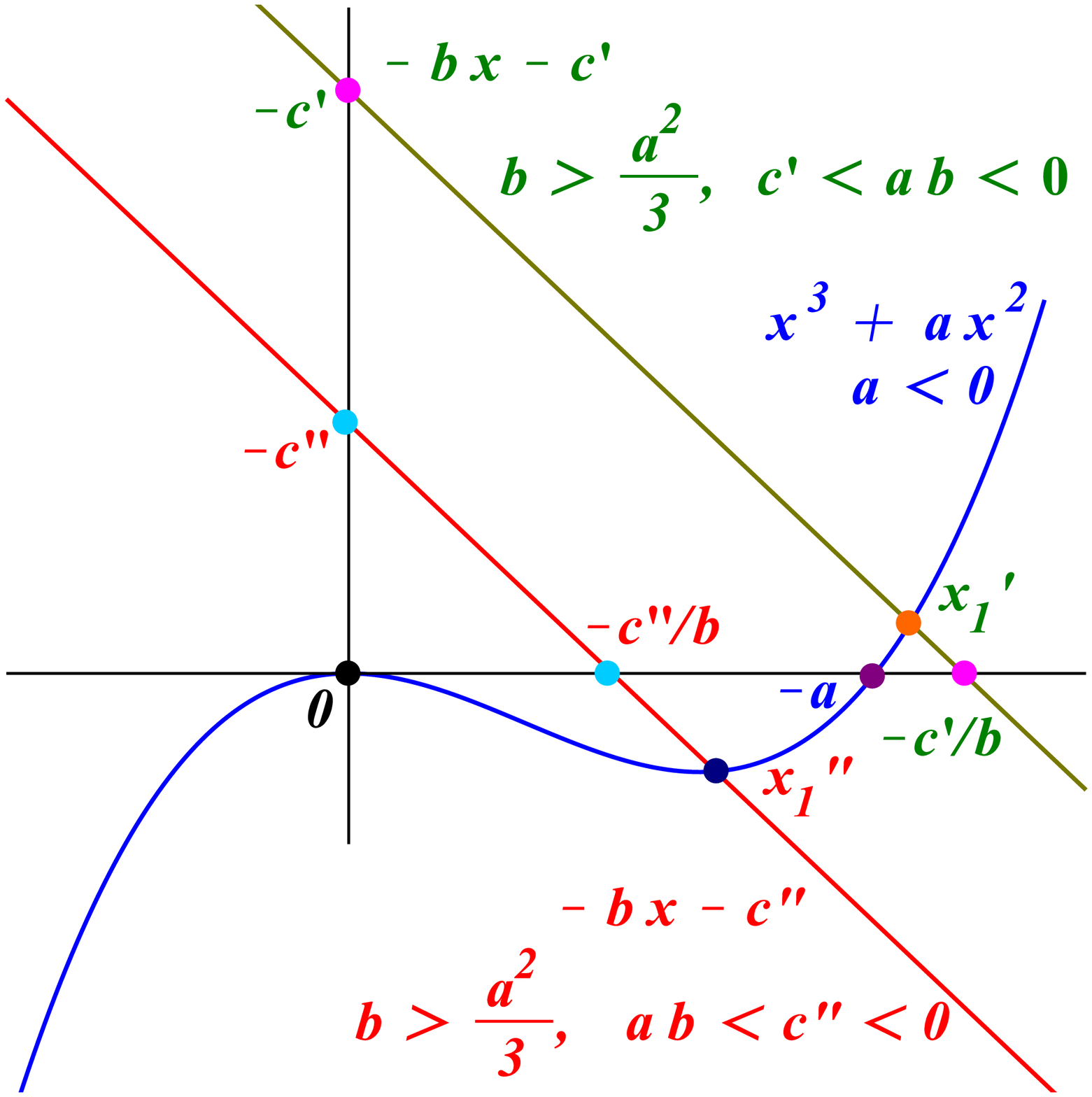} & \includegraphics[width=40mm]{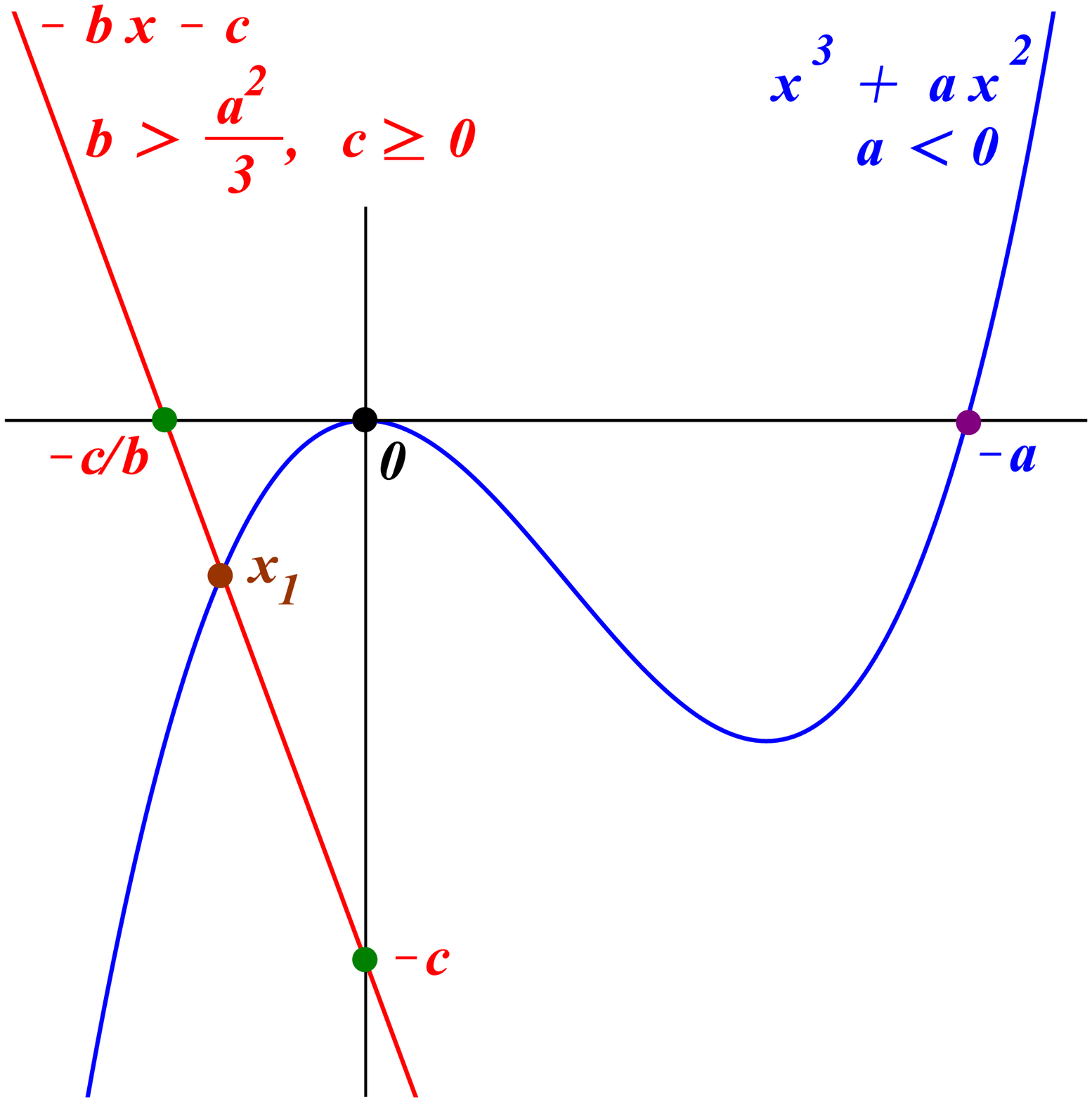} \\
{\scriptsize {\bf Figure 7}} &  {\scriptsize {\bf Figure 8}} \\
{\scriptsize {\it Proof of Theorem 4(III)}} & {\scriptsize {\it Proof of Theorem 4(IV)}} \\
& \\
\multicolumn{1}{c}{\begin{minipage}{18em}
\scriptsize
\vskip-.15cm
When $a < 0$ and $c < 0$, the isolation interval of the single root $x_1$ is: min$\{-a, -c/b\} \le x_1 \le $ max$\{-a, -c/b\}$.
\end{minipage}}
& \multicolumn{1}{c}{\begin{minipage}{18em}
\scriptsize
\vskip-.45cm
When $a < 0$ and $c \ge 0$, the isolation interval of the single root $x_1$ is: $0 \le x_1 \le -c/b$.
\end{minipage}}
\\
\\
\end{tabular}
\end{center}

\section{Roles of the Coefficients and Root Isolation Intervals --- Summary and Application of the Analysis}
\begin{itemize}
\item[\bf{(a)}] The coefficient $a$ of the quadratic term of $x^3 + a x^2 + b x + c$ selects the centre $\phi = - a/3$ of the inscribed circle of the equilateral triangle that projects onto the roots of $x^3 + a x^2 + b x + c$, in the case of three real roots. The centre of this circle is also the projection of the inflection point of the graph of $x^3 + a x^2 + b x + c$ onto the abscissa. The inscribed circle projects to an interval on the abscissa with endpoints equal to the projections of the stationary points of $x^3 + a x^2 + b x + c$ (Figure 1).

\item[\bf{(b)}] For any given $a$, the coefficients $b$ of the linear term of $x^3 + a x^2 + b x + c$ determines the radius $r = (1/3) \sqrt{a^2 - 3b}$ of the inscribed circle. The circumscribed circle of the equilateral triangle has radius $2 r = (2/3) \sqrt{a^2 - 3b}$. \\
    If a cubic polynomial has two stationary points, the distance between them is always $2 r = (2/3) \sqrt{a^2 - 3b}$. \\
    The inflection point of the graph of $x^3 + a x^2 + b x + c$ is always the midpoint ($-a/3$) between the stationary points of the cubic polynomial. \\
    {\it Hence, the analysis of the cubic polynomial $x^3 + a x^2 + b x + c$ should start with what the value of $b$, relative to $a^2/3$, is.}
\begin{itemize}
  \item[\bf{(I)}] If $\bm{b < a^2/3}$ and if:
\begin{itemize}
\item[\bf{(i)}] $\bm{c_2 \le c \le c_0}$, then the polynomial $x^3 + a x^2 + b x + c$ has three real roots with the following isolation intervals: $x_3 \in [\nu_3, \mu_2], \,\, x_2 \in [\mu_2, \phi]$, and  $x_1 \in [\nu_1, \xi_2]$ (Figure 2).
\item[\bf{(ii)}] $\bm{c_0 \le c \le c_1}$, then the polynomial $x^3 + a x^2 + b x + c$ has three real roots with the following isolation intervals: $x_3 \in [\xi_1, \nu_3], \,\, x_2 \in [\phi,\mu_1]$, and  $x_1 \in [\mu_1, \nu_1]$ (Figure 2).
\end{itemize}
In the above, $c_{1,2} = c_0  \pm  (2/27) \sqrt{( a^2 - 3 b)^3}$, with $c_0 = - 2a^3/27 + a b/3$, are the values of $c$ for which, for any $a$ and $b < a^2/3$, the discriminant $\Delta_3$ of the cubic polynomial $x^3 + a x^2 + b x + c$ is zero ($\Delta_3$ positive for $c$ between $c_2$ and $c_1$). Namely, these are the roots of the quadratic equation (\ref{quadratic}): $x^2 + (4 a^3/27 - 2 a b/3 ) x  - a^2 b^2/27  + 4 b^3/27 = 0$. \\
Also in the above,  $\nu_3 = -a/3 - \sqrt{a^2/3 - b}$, $\nu_2 = \phi = -a/3$, and $\nu_1 = - a/3 + \sqrt{a^2/3 - b}$ are three real roots of the ``balanced" cubic polynomial $x^3 + a x^2 + b x + c_0$ (Figure 2). \\
The roots of the ``extreme" cubic $x^3 + a x^2 + b x + c_1$ are the double root $\mu_1 = -a/3 + (\sqrt{3}/3) \, \sqrt{a^2/3-b}$  and the simple root $\xi_1 =  - a - 2 \mu_1 = - a/3 - 2r  = - a/3 - (2/3) \sqrt{a^2 - 3 b}$. Likewise, the roots of the ``extreme" cubic $x^3 + a x^2 + b x + c_1$ are the double root $\mu_2  = -a/3 - (\sqrt{3}/3) \, \sqrt{a^2/3-b}$ and the simple root $\xi_2 = - a - 2 \mu_2 = - a/3 + 2r  = - a/3 + (2/3) \sqrt{a^2 - 3 b}$ (Figure 2 and Figure 3). \\
The biggest distance between any two of the three real roots of the cubic equation $x^3 + a x^2 + b x + c = 0$ is $\alpha = \sqrt{12} r = (\sqrt{12}/3) \sqrt{a^2 - 3b}$ --- achieved for the roots of the ``balanced" cubic equation $x^3 + a x^2 + b x + c_0$ (Figure 2). \\
For any other cubic equation with $c_2 \le c \le c_1$, the three real roots are within an interval of length $3 r = \sqrt{a^2 - 3b} < \alpha$ (Figure 2).
\begin{itemize}
\item[\bf{(iii)}] $\bm{c < c_2}$, then the polynomial $x^3 + a x^2 + b x + c$ has only one real root: $x_1 > \xi_2 = - a - 2 \mu_2 = - a/3 + 2r  = - a/3 + (2/3) \sqrt{a^2 - 3 b}$ (Figure 3). The root $x_1$ can be bounded from above by a polynomial root bound.
\item[\bf{(iv)}] $\bm{c > c_1}$, then the polynomial $x^3 + a x^2 + b x + c$ has only one real root: $x_1 < \xi_1 =  - a - 2 \mu_1 = - a/3 - 2r  = - a/3 - (2/3) \sqrt{a^2 - 3 b}$ (Figure 3). The root $x_1$ can be bounded from below by a polynomial root bound.
\end{itemize}

\item[\bf{(II)}] If $\bm{b = a^2/3}$ and if:
\begin{itemize}
    \item[\bf{(i)}] $\bm{c < (1/27) a^3}$, then the polynomial $x^3 + a x^2 + b x + c$ has only one real root: $x_1 = -a/3 + \sqrt[3]{a^3/27 - c} \, > \, -a/3$ (Figure 4).
    \item [\bf{(ii)}] $\bm{c = (1/27) a^3}$, then the polynomial $x^3 + a x^2 + b x + c$ has a triple real root: $x_1 = x_2 = x_3 = -a/3$ (Figure 4).
    \item [\bf{(iii)}] $\bm{c > (1/27) a^3}$, then the polynomial $x^3 + a x^2 + b x + c$ has only one real root: $x_1 = -a/3 + \sqrt[3]{a^3/27 - c} \, < \, -a/3$ (Figure 4).
\end{itemize}
\item[\bf{(III)}] If $\bm{b > a^2/3}$, the discriminant of the cubic polynomial is negative and thus $x^3 + a x^2 + b x + c$  has one real root $x_1$ and a pair of complex conjugate roots. The isolation interval of $x_1$ depends on the signs of $a$ and $c$ and is as follows:
\begin{itemize}
\item [\bf{(i)}] If $\bm{a \ge 0}$ and $\bm{c \le 0}\!: \,\, 0 \le x_1 \le -c/b$ (Figure 5).
\item [\bf{(ii)}] If $\bm{a \ge 0}$ and $\bm{c > 0}\!: \,\,$  min$\{-a, -c/b\} \le x_1 \le $ max$\{-a, -c/b\}$ (Figure 6).
\item [\bf{(iii)}] If $\bm{a < 0}$ and $\bm{c < 0}\!: \,\,$  min$\{-a, -c/b\} \le x_1 \le $ max$\{-a, -c/b\}$ (Figure 7).
\item [\bf{(iv)}] If $\bm{a < 0}$ and $\bm{c \ge 0}\!: \,\, -c/b \le x_1 \le 0$ (Figure 8).
\end{itemize}

\end{itemize}

\item[\bf{(c)}] The coefficient $c$ of $x^3 + a x^2 + b x + c$ rotates the equilateral triangle (which exists if $b < a^2/3$) that projects onto the roots $x_3 \le x_2 \le x_1$ (at least two of which are different) of the cubic polynomial. The vertices $P$, $Q$, and $R$ of the triangle are points of coordinates $(x_1, (x_2 - x_3)/\sqrt{3})$, $(x_2, (x_3 - x_1)/\sqrt{3})$, and $(x_3, (x_1 - x_2)/\sqrt{3})$, respectively. Point $Q$ is always below the abscissa and points $P$ and $R$ --- always above it. \\
    When $c = c_0 = - 2a^3/27 + a b/3$, the side $PR$ is parallel to the abscissa. This corresponds to the ``balanced" cubic equation $x^3 + a x^2 + b x - 2a^3/27 + a b/3 = 0$, the roots of which are symmetric with respect to the centre of the inscribed circle: $\nu_3 = -a/3 - \sqrt{a^2/3 - b}$, $\nu_2 = \phi = -a/3$, and $\nu_1 = - a/3 + \sqrt{a^2/3 - b}$. The ``balanced" equation has triangle $P_0 Q_0 R_0$ (Figure 2). \\
    When $c$ increases from $c_0$ towards $c_1 > c_0$, the equilateral triangle $PQR$ rotates counterclockwise around its centre from the position of triangle $P_0 Q_0 R_0$ of the ``balanced" equation. When $c = c_1$, the roots $x_2$ and $x_1$ coalesce into the double root $\mu_1$, while the smallest root $x_3$ becomes equal to $\xi_1 =  - a - 2 \mu_1 = - a/3 - 2r  = - a/3 - (2/3) \sqrt{a^2 - 3 b}$. The triangle in this case is $P_1 Q_1 R_1$ and its side $P_1 Q_1$ is perpendicular to the abscissa. The vertex $R_1$ is on the abscissa. The triangle cannot be rotated further counterclockwise as, when $c > c_1$, the polynomial $x^3 + a x^2 + b x + c$ has only one real root (Figure 2). \\
    When $c$ decreases from $c_0$ towards $c_2 < c_0$, the equilateral triangle $PQR$ rotates clockwise around its centre from the position of triangle $P_0 Q_0 R_0$ of the ``balanced" equation. When $c = c_2$, the roots $x_3$ and $x_2$ coalesce into the double root $\mu_2$, while the biggest root $x_1$ becomes equal to $\xi_2 = - a - 2 \mu_2 = - a/3 + 2r  = - a/3 + (2/3) \sqrt{a^2 - 3 b}$. The triangle in this case is $P_2 Q_2 R_2$ and its side $R_2 Q_2$ is perpendicular to the abscissa. The vertex $P_2$ is on the abscissa. The triangle cannot be rotated further clockwise as, when $c < c_2$, the polynomial $x^3 + a x^2 + b x + c$ has only one real root (Figure 2).
\end{itemize}

\section{Examples}

\n
Each possible case --- for each Theorem (1 to 4, with the relevant subsection of the Theorem given in brackets in Roman numerals) --- is illustrated with an example. The roots of the cubics in these examples are found numerically with Maple 2021.
\begin{enumerate}
\item {\bf Theorem 1(I)}, $\bm{b < a^2/3, \,\, c_2 \le c \le c_0}$: \,\,\, \underline{$x^3 +3x^2 + 2x - \frac{1}{4} = 0$}.
\vskip.01cm
One has: $c_0 = 0, \,\, c_1 = 0.385, \,\, c_2 = -0.385$. Also: $\mu_1 = -0.423, \, \, \mu_2 = -1.577, \,\, \nu_1 = 0, \,\, \nu_2 = \phi = -1, \,\, \nu_3 = -2, \,\, \xi_1 = -2.155,$ and $\xi_2 = 0.155$. \\
The root isolation intervals are: $\nu_1 \le x_1 \le \xi_2, \,\,  \mu_2 \le x_2 \le \phi, $ and $\nu_3 \le x_3 \le \mu_2$, that is: $0 \le x_1 \le 0.155, \,\, -1.577 \le x_2 \le -1, $ and $-2 \le x_3 \le -1.577$. \\
The roots are: $x_1 = 0.107, \,\, x_2 = -1.270, $ and $x_3 = -1.840$.

\item {\bf Theorem 1(II)}, $\bm{b < a^2/3, \,\, c_0 \le c \le c_1}$: \,\,\, \underline{$x^3 - 4x^2 + 2x + 3 = 0$}. \\
One has: $c_0 = 2.074, \,\, c_1 = 4.416, \,\, c_2 = -0.268$. Also: $\mu_1 = 2.387, \, \, \mu_2 = 0.279, \,\, \nu_1 = 3.158, \,\, \nu_2 = \phi = 1.333, \,\, \nu_3 = -0.492, \,\, \xi_1 = -0.775,$ and $\xi_2 = 3.441$. \\
The root isolation intervals are: $\mu_1 \le x_1 \le \nu_1, \,\, \phi \le x_2 \le \mu_1, $ and $\xi_1 \le x_3 \le \nu_3$, that is: $2.387 \le x_1 \le 3.158, \,\, 1.333 \le x_2 \le 2.387, $ and $-0.775 \le x_3 \le -0.492$. \\
The roots are: $x_1 = 3, \,\, x_2 = 1.620, $ and $x_3 = -0.618$.

\item {\bf Theorem 2(I)}, $\bm{b < a^2/3, \,\, c < c_2}$: \,\,\,  \underline{$x^3 - 4x^2 + 3x - 1 = 0$}. \\
One has: $c_0 = 0.741, \,\, c_1 = 2.113, \,\, c_2 = -0.631$. Also: $\mu_1 = 2.215, \, \, \mu_2 = 0.451, \,\, \nu_1 = 2.861, \,\, \nu_2 = \phi = 1.333, \,\, \nu_3 = -0.195, \,\, \xi_1 = -0.431,$ and $\xi_2 = 3.097$. \\
There is only one real root: $x_1 > \xi_2$, that is $x_1 > 3.097$. This can be bounded from above by using a polynomial root bound. Both bounds given earlier yield that $x_1 < 5$. \\
The roots are: $x_1 = 3.150$ and $x_{2,3} = 0.426 \pm 0.369i$.

\item {\bf Theorem 2(II)}, $\bm{b < a^2/3, \,\, c > c_1}$: \,\,\,  \underline{$x^3 + 2x^2 + \frac{1}{2}x - 1 = 0$}.
\vskip0.01cm
One has: $c_0 = -0.259, \,\, c_1 = 0.034, \,\, c_2 = -0.552$. Also: $\mu_1 = -0.140, \, \, \mu_2 = -1.194, \,\, \nu_1 = 0.246, \,\, \nu_2 = \phi = -0.667, \,\, \nu_3 = -1.580, \,\, \xi_1 = -1.721,$ and $\xi_2 = 0.387$. \\
There is only one real root: $x_1 < \xi_1$, that is $x_1 < -1.721$. This can be bounded from bellow by using a polynomial root bound. Both bounds given earlier again agree and yield that $- 3 < x_1$. \\
The roots are: $x_1 = -2$ and $x_{2,3} = \pm 0.707i$.

\item {\bf Theorem 3(I)}, $\bm{b = a^2/3, \,\, c < a^3/27}$: \,\,\,  \underline{$x^3 - 2x^2 + \frac{4}{3}x - 2 = 0$}.
\vskip0.01cm
There is only one real root and it can be determined by completing the cube: $x^3 + ax^2 + (a^2/3)x + c = (x + a/3)^3 - a^3/27 + c$. Hence, $x_1 = -a/3 + \sqrt[3]{a^3/27 - c} = 1.862$. The other two roots are $x_{2,3} = 0.070 \pm 1.030i$.

\item {\bf Theorem 3(II)}, $\bm{b = a^2/3, \,\, c = a^3/27}$: \,\,\,  \underline{$x^3 + 5x^2 + \frac{25}{3}x + \frac{125}{27} = 0$}.
\vskip0.01cm
There is a triple real root that can be determined exactly: $x_{1,2,3} = -a/3 = -5/3$. \\

\item {\bf Theorem 3(III)}, $\bm{b = a^2/3, \,\, c > a^3/27}$: \,\,\,  \underline{$x^3 - 6x^2 + 12x + 5 = 0$}. \\
There is only one real root and it can be determined by completing the cube: $x^3 + ax^2 + (a^2/3)x + c = (x + a/3)^3 - a^3/27 + c$. Hence, $x_1 = -a/3 + \sqrt[3]{a^3/27 - c} = - 0.351$. The other two roots are $x_{2,3} = 3.176 \pm 2.036i$.

\item {\bf Theorem 4(I)}, $\bm{b > a^2/3, \,\, a \ge 0, \,\, c \le 0}$: \,\,\,  \underline{$x^3 + x^2 + 2x - 3 = 0$}. \\
There is only one real root $x_1$ and its isolation interval is $0 \le x_1 \le -c/b$, that is $0 \le x_1 \le 1.5$. \\
The roots are: $x_1 = 0.844$ and $x_{2,3} = -0.922 \pm 1.645i$.

\item {\bf Theorem 4(II)}, $\bm{b > a^2/3, \,\, a \ge 0, \,\, c > 0}$: \,\,\,  \underline{$x^3 - x^2 + 10x + 7 = 0$}. \\
There is only one real root $x_1$ and its isolation interval is min$\{-a, -c/b\} \le x_1 \le $ max$\{-a, -c/b\}$, that is $-0.7 \le x_1 < 1$. \\
The roots are: $x_1 = -0.634$ and $x_{2,3} = 0.817 \pm 0.322i$.

\item {\bf Theorem 4(III)}, $\bm{b > a^2/3, \,\, a < 0, \,\, c < 0}$: \,\,\,  \underline{$x^3 - 2x^2 + 13x - 11 = 0$}. \\
There is only one real root $x_1$ and its isolation interval is min$\{-a, -c/b\} \le x_1 \le $ max$\{-a, -c/b\}$, that is $0.846 \le x_1 \le 2$. \\
The roots are: $x_1 = 0.916$ and $x_{2,3} = 0.542 \pm 3.422i$.

\item {\bf Theorem 4(IV)}, $\bm{b > a^2/3, \,\, a < 0, \,\, c \ge 0}$: \,\,\,  \underline{$x^3 - 3x^2 + 21x + 7 = 0$}. \\
There is only one real root $x_1$ and its isolation interval is $-c/b \le x_1 \le 0$, that is $-0.333 \le x_1 \le 0$. \\
The roots are: $x_1 = -0.317$ and $x_{2,3} = 1.659 \pm 4.393i$.

\end{enumerate}


\begin{thebibliography}{9}

\bibitem{m1} M. Marden, {\it Geometry of Polynomials, Math. Surveys no. 3}, American Mathematical Society, Providence, RI (1966).

\bibitem{m2} M. Marden, {\it A Note on the Zeros of the Sections of a Partial Fraction}, Bull. Amer. Math. Soc. {\bf 51}, 935--940 (1945), doi: 10.1090/S0002-9904-1945-08470-5.

\bibitem{sie} J. Siebeck, {\it Ueber eine neue analytische behandlungweise der brennpunkte}, J. Reine Angew. Math. {\bf 64}, 175--182 (1864).

\bibitem{kal} D. Kalman, {\it An elementary proof of Marden's theorem}, Amer. Math. Monthly {\bf 115(4)}, 330--338 (2008), doi: 10.1080/00029890.2008.11920532.

\bibitem{bad} E. Badertscher, {\it A Simple Direct Proof of Marden’s Theorem}, Amer. Math. Monthly, {\bf 121(6)}, 547--548 (2014), doi: 10.4169/amer.math.monthly.121.06.547.

\bibitem{st} J. Steiner, {\it G\'eom\'etrie pure. D\'eveloppment d'une s\'erie de th\'eor\`emes relatifs aux sections coniques}, Ann. Math. Pures Appl. {\bf 19}, 37--64 (1828/1829).

\bibitem{north} S. Northshield, {\it Geometry of Cubic Polynomials}, Mathematics Magazine {\bf 86(20)}, 136--143 (2013), doi: 10.4169/math.mag.86.2.136.

\bibitem{1} E.M. Prodanov, {\it New Bounds on the Real Polynomial Roots}, Comptes Rendus de l'Acad\'emie Bulgare des Sciences {\bf 75(2)}, 178--186 (2022), doi: 10.7546/CRABS.2022.02.02, arXiv:2008.11039.

\bibitem{kur} L.E. Dickson, {\it First Course in the Theory of Equations}, Braunworth (1922).

\bibitem{2} Emil M. Prodanov, {\it On the Determination of the Number of Positive and Negative Polynomial Zeros and Their Isolation}, Open Mathematics (de Gruyter) {\bf 18}, 1387--1412 (2020), doi: 10.1515/math-2020-0079, arXiv: 1901.05960.

\bibitem{3} Emil M. Prodanov, {\it Classification of the Roots of the Quartic Equation and their Pythagorean Tunes}, International Journal of Applied and Computational Mathematics (Springer) {\bf 7}, 218 (2021), doi: 10.1007/s40819-021-01152-w, arXiv: 2008.07529.

\bibitem{4} Emil M. Prodanov, {\it A Method for Locating the Real Roots of the Symbolic Quintic Equation Using Quadratic Equations}, Advanced Theory and Simulations (Wiley), 2200011 (2022), doi: 10.1002/adts.202200011, arXiv:2106.02977.


\end{thebibliography}
\end{document}